\newtheorem{thm}{Theorem}[section]
\newtheorem{lemma}[thm]{Lemma}
\newtheorem{cor}[thm]{Corollary}
\newtheorem{conj}[thm]{Conjecture}
\theoremstyle{definition}
\newtheorem{example}[thm]{Example}
\theoremstyle{remark}
\newtheorem{remark}[thm]{Remark}
\numberwithin{equation}{section}
\def\alp{{\alpha}}
\def\del{{\delta}} \def\Del{{\Delta}}
\def\tet{{\theta}}  
\def\kap{{\kappa}}
\def\lam{{\lambda}}
\def\sig{{\sigma}}
\def\ome{{\omega}}  
\def\eps{\varepsilon}
\def\le{\leqslant} \def\ge{\geqslant}
\def \sig{{\sigma}}
\def \ind {{\mathrm{ind}}}
\def \bC {\mathbb C}
\def \bF {\mathbb F}
\def \bN {\mathbb N}
\def \bQ {\mathbb Q}
\def \bR {\mathbb R}
\def \bZ {\mathbb Z}
\def \be {\mathbf e}
\def \bg {\mathbf g}
\def \bt {\mathbf t}
\def \bx {\mathbf x}
\def \bw {\mathbf w}
\def \fg {\mathfrak g}
\def \cC {\mathcal C}
\def \cF {\mathcal F}
\def \cO {\mathcal O}
\def \cY {\mathcal Y}
\def \cZ {\mathcal Z}
\def \deg {\mathrm{deg}}
\def \Gal {{\mathrm{Gal}}}
\def \Fix {{\mathrm{Fix}}}
\begin{document}
\title[Enumerative Galois theory for number fields]{Enumerative Galois theory for number fields}
\subjclass[2020]{11R32 (primary); 11C08, 11D45, 11G35 (secondary)}
\keywords{Galois theory, diophantine equations}
\author{Sam Chow \and Rainer Dietmann}
\address{Mathematics Institute, Zeeman Building, University of Warwick, Coventry CV4 7AL, United Kingdom}
\email{Sam.Chow@warwick.ac.uk}
\address{Department of Mathematics, Royal Holloway, University of London\\
Egham TW20 0EX, United Kingdom}
\email{Rainer.Dietmann@rhul.ac.uk}

\begin{abstract} 
Counting number fields with prescribed Galois group is an enduring challenge in arithmetic statistics. Using the determinant method, we provide an upper bound for even groups, which is new in some cases.
\end{abstract}

\maketitle

{\centering\footnotesize \em{Dedicated to Trevor Wooley on the occasion of his diamond jubilee, with gratitude for all he has done for us and for mathematics} \par}

\section{Introduction}

Fix an integer $n \ge 2$ and, throughout, let $X \in \bN$ be large. Let $F_n(X)$ count number fields $K$ such that 
\[
[K:\bQ] = n, \qquad |\Del_K| \le X,
\]
where $\Del_K$ is the discriminant of $K$ and our numbers fields all lie in a fixed algebraic closure of the rationals. By the Hermite--Minkowski theorem, the quantity $F_n(X)$ is finite. 

An old folklore conjecture asserts that $F_n(X) \sim c_n X$, for some $c_n > 0$. This is known for $n \le 5$, through the works of Davenport--Heilbronn~\cite{DH1971}, Cohen--Diaz y Diaz--Olivier \cite{CDO2002}, and Bhargava \cite{Bha2005, Bha2010}. For larger values of $n$, the problem remains open. 

Schmidt~\cite{Sch1995b} famously showed that
\[
F_n(X) \ll_n X^{\sig_n},
\]
where
\[
\sig_n = \frac{n+2}{4}.
\]
This was subsequently improved by Ellenberg and Venkatesh~\cite{EV2006}, Couveignes~\cite{Cou2020}, and most recently by Lemke Oliver and Thorne~\cite{LOT2022}. Lemke Oliver and Thorne's bound $O_n(X^{1.564(\log n)^2})$ is best known for $n \ge 159$. We state these bounds in Table~\ref{SchmidtTable}, so as to list the best bounds for large, fixed $n$. Throughout, the parameter $C = C_n \in \bN$ denotes a large constant.
\begin{table}[ht]
\begin{tabular}{|c|c|c|}
\hline
Year & Authors & $F_n(X) \ll_n$ \\
\hline
$1995$ & Schmidt & $X^{\sig_n}$ \\
$2006$ & Ellenberg, Venkatesh & $\exp(C \sqrt{\log n})$ \\
$2019$ & Couveignes & $X^{C (\log n)^3}$ \\
$2020$ & Lemke Oliver, Thorne & $ X^{1.564(\log n)^2}$ \\ 
\hline
\end{tabular}
\caption{\label{SchmidtTable}Best upper bounds for $F_n(X)$ when $n$ is large}
\end{table}

In the intermediate range $6 \le n \le 158$, the best known bound 
\[
F_n(X) \ll_{n,\eps} X^{\ome_n + \eps}
\]
was recently demonstrated by Bhargava, Shankar and Wang \cite{BSW2022}, where
\[
\ome_n = \sig_n - \: \frac1{2n-2} + \frac1{2^{2\lfloor(n-1)/2\rfloor}(2n-2)}.
\]
Concurrently, a weaker bound of this type was obtained in \cite{Kevin}.

The estimates $F_n(X) \sim c_n X$ for $n = 3, 4, 5$ were established by stratifying the count according to the Galois group of the polynomial. The latter is an interesting problem in its own right, as we discuss next. Our results are new bounds for certain groups.

\subsection{A Galois counting problem for number fields}

For $G$ a transitive subgroup of $S_n$, write $\cF_n(X;G)$ for the set of number fields $K$ such that
\begin{equation} \label{GExt}
[K:\bQ] = n,
\qquad
\Gal(\hat K / \bQ) = G, \qquad |\Del_K| \le X,
\end{equation}
and write $F_n(X;G)$ for the cardinality of this set. Here we fix an algebraic closure $\overline \bQ$, and $\hat K \le \overline \bQ$ denotes the Galois closure of $K$ over $\bQ$. The Galois group $\Gal(\hat K / \bQ)$ is considered as a permutation group on the $n$ embeddings of $K$ into $\overline \bQ$, and equality is up to conjugacy.

Malle's conjecture \cite{Mal2002, Mal2004} predicts the asymptotic behaviour of the quantity $F_n(X;G)$. To begin with, we state a weak form of it below, as given in \cite{Alb2020}. Define
\[
\ind(G) = \min \{ \ind (g): 1 \ne g \in G \},
\]
where $n - \ind(g)$ denotes the number of orbits of $g$ on $\{1,2,\ldots,n\}$.

\begin{conj} [Weak Malle conjecture over the rationals]
\label{WeakMalle}
For any $\eps > 0$,  we have
\[
X^{1/\ind(G)} \ll_n F_n(X;G) \ll_{n,\eps} X^{\eps + 1/\ind(G)}.
\]
\end{conj}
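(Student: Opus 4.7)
The conjecture has a lower bound half, which is essentially known, and an upper bound half, which contains the substantive difficulty. For the lower bound, I would fix an element $g_0 \in G$ realising the minimum $\ind(G)$, and for each prime $p$ in a dyadic range $[Y,2Y]$ construct a degree-$n$ extension $K_p/\bQ$ with $\Gal(\hat K_p/\bQ) = G$ and inertia at $p$ generated by a conjugate of $g_0$. Such a field has $|\Del_{K_p}| \asymp p^{\ind(G)}$, so choosing $Y = X^{a(G)}$ and distinguishing the fields by their unique (tame) ramified prime produces $\gg X^{a(G)}$ distinct members of $\cF_n(X;G)$. The mechanics combine a local specification of inertia via local class field theory with a global realisation step (for instance via a suitable Hilbert irreducibility argument) to ensure that the Galois group is exactly $G$ rather than a proper subgroup.

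For the upper bound, I would run the polynomial counting framework of Schmidt, refined as in Ellenberg--Venkatesh and Lemke Oliver--Thorne, with an added Galois constraint. Every degree-$n$ field $K$ with $|\Del_K| \le X$ admits a small-height primitive integer generator $\tet \in \cO_K$, and requiring $\Gal(\hat K/\bQ) = G$ forces its minimal polynomial $f \in \bZ[x]$ to lie in a Zariski-closed subvariety $V_G \subset \bA^n$ defined by vanishing conditions on suitable resolvent polynomials. The heuristic codimension of $V_G$ is $\ind(G)$, so a uniform geometry-of-numbers count on $V_G$, together with the standard correction for polynomials generating the same field, would formally deliver the conjectured exponent $a(G) = 1/\ind(G)$.

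The main obstacle, and the reason this strategy does not yet yield the conjecture, is the discrepancy between the polynomial discriminant and the field discriminant. Since $\Del_f = [\cO_K:\bZ[\tet]]^2 \Del_K$ and there is no uniform bound on the index $[\cO_K:\bZ[\tet]]$ sharp enough to prevent the count from being dominated by polynomials whose discriminants are inflated by a large square factor, the naive argument loses. This is precisely Schmidt's obstruction, and every subsequent improvement in Table~\ref{SchmidtTable}, together with the work of Bhargava--Shankar--Wang, chips away at it without removing it. I would expect the above approach to succeed outright only in favourable cases: for abelian $G$ via class field theory, which bypasses the polynomial parametrisation entirely, and for small $n$ via Bhargava's prehomogeneous vector space methods. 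The general case appears to demand a genuinely new parametrisation of number fields of degree $n$, and overcoming this is the fundamental difficulty.
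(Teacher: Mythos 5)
The statement you are asked about is Conjecture~\ref{WeakMalle}, not a theorem: the paper offers no proof of it and explicitly says it ``remains largely open.'' Your write-up is accordingly not a proof but a strategy survey, and to your credit you say so yourself in the final paragraph. So the honest verdict is that there is nothing to compare against, and your text should not be read as closing the statement. Within that framing, your diagnosis of the upper-bound obstruction is accurate: the loss from the index $[\cO_K:\bZ[\tet]]$, i.e.\ the gap between $\Del_f$ and $\Del_K$, is exactly why the resolvent-plus-geometry-of-numbers framework (which is the engine of this paper's Theorem~\ref{MainThm}) produces exponents like $E(G)$ or Bhargava's $B(G)$ rather than $a(G)+\eps$.

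One genuine error to flag: the lower bound half is \emph{not} ``essentially known.'' Your construction presupposes that $G$ is realisable as a Galois group over $\bQ$ at all, which is the inverse Galois problem and is open for general $G$; and even for realisable $G$, producing infinitely many $G$-extensions with a single tamely ramified prime whose inertia is generated by a prescribed minimal-index element is a hard problem (this is essentially the content of work such as \cite{LLOT} for alternating groups, where even getting the right exponent in the lower bound is a recent achievement). Local class field theory controls the local behaviour but does not by itself glue to a global $G$-extension for non-abelian $G$. So both halves of the conjecture are genuinely open, and your proposal, while a reasonable account of the known techniques and their limits, establishes neither.
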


Malle's prediction was in fact stronger.

\begin{conj} [Malle's conjecture over the rationals]
\label{Malle}
There exist $a,b,c \in \bZ$, depending on $G$, such that
\[
F_n(X;G) \sim cX^{1/a} (\log X)^{b-1} \qquad (X \to \infty).
\]
\end{conj}

\noindent
This formulation is widely believed to this day. Malle's predicted value $a = \ind(G)$ is also widely believed, but there has been some dispute over the values of $b$ and $c$. Famously, Kl\"uners's \cite{Klu2005} gave a counterexample to Malle's initial suggested value of $b$. T\"urkelli \cite{Tur2015}, based on a function field analysis, proposed another value of $b$. In our setting, this has since been contradicted and refined by Wang \cite{Wan}. (However, over function fields, T\"urkelli's value of $b$ has been validated by Landesman and Levy~\cite{LL}.) The value of $c$ has seemed mysterious up until recently, when Loughran and Santens \cite{LS} used Brauer groups of stacks to put forward a conjectural value. This comes from an interpretation in terms of Manin's conjecture, continuing the theme of \cite{ESZ2023}.

Malle's conjecture is known to hold for abelian groups \cite{Mak1985, Wri1989} and certain low-degree groups \cite{Bha2005, Bha2010, DH1971}.
Some other cases are also known \cite{ALWW, FK2021, KP2023, MTTW, Wan2021}.
Nonetheless, the problem remains very much open. 

Let us now turn to groups for which Malle's conjecture is not known. Mehta obtained bounds in some special cases \cite{Meh2019}. For certain groups, Dummit \cite{Dum2018} established a bound on $F_n(X;G)$ that is smaller than Schmidt's bound on $F_n(X)$. 

\begin{thm} [Dummit] \label{DummitThm} Let $G$ be a proper, transitive subgroup of $S_n$. Let $t$ be such that if $K \in \cF_n(X;G)$ and $K'$ is a proper subfield of $K$ then $[K':\bQ] \le t$. Then
\begin{equation*}
F_n(X;G) \ll_{n,\eps} X^{D_G + \eps},
\end{equation*}
where
\[
D_G = \inf \left\{
\begin{aligned}
&\frac{1}{2(n - t)} \left( -1 + \sum_{i=2}^{n} \deg(f_i) \right) : \\
&f_1, \ldots, f_n\ \text{is a set of primary invariants for } G
\end{aligned}
\right\}.
\]
Further, one can take
\[
\deg(f_i) \le i \qquad (1 \le i \le n).
\]
\end{thm}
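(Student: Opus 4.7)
The plan combines a Minkowski-type argument---producing for each $K$ a small primitive element $\alpha \in \cO_K$ with $\bQ(\alpha) = K$---with classical invariant theory, which compresses the condition $\Gal(\hat K/\bQ) = G$ into a short list of bounded integer coordinates, followed by a count of lattice points in a box. Throughout I fix $K \in \cF_n(X;G)$, denote by $\alpha^{(1)}, \ldots, \alpha^{(n)}$ the complex conjugates of a generator $\alpha \in \cO_K$, and set $H := X^{1/(2(n-t))}$.

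\emph{Geometry of numbers.} The first step is the following lemma: there exists $\alpha \in \cO_K$ with $\bQ(\alpha) = K$ and $\max_i |\alpha^{(i)}| \ll_n H$. The Minkowski embedding $\cO_K \hookrightarrow K \otimes_\bQ \bR$ is a lattice of covolume $\asymp \sqrt{|\Del_K|} \le X^{1/2}$, and Minkowski's second theorem, combined with $\lambda_1 \cdots \lambda_n \asymp \sqrt{|\Del_K|}$ and the trivial lower bounds $\lambda_i \ge 1$, yields
\[
\lambda_{t+1}^{\,n-t} \;\le\; \prod_{i=1}^n \lambda_i \;\ll_n\; X^{1/2}, \qquad \text{i.e.}\qquad \lambda_{t+1} \ll_n H.
\]
The subtlety is that the set of non-primitive elements is the union $\bigcup_{K' \subsetneq K} K'$ of finitely many proper $\bQ$-subspaces, each of dimension at most $t$ by hypothesis. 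Since $\lambda_1, \ldots, \lambda_{t+1}$ witness $t+1$ linearly independent lattice vectors of height $\ll_n H$, some $\{0,1\}$-combination of them avoids every such subspace and so gives the desired generator. Translating by an integer then normalises $|\Tr_{K/\bQ}(\alpha)| \le n/2$. I expect this subfield-avoidance step to be the main technical obstacle, since the exceptional locus is a union of subspaces rather than a single one.

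\emph{Invariant encoding and counting.} Given such $\alpha$, for any primary invariants $f_1, \ldots, f_n \in \bZ[\bx]^G$ the scalars $v_i := f_i(\alpha^{(1)}, \ldots, \alpha^{(n)})$ are fixed by the Galois action on embeddings (which factors through $G$), hence rational; they are integral since $\alpha \in \cO_K$ and $f_i \in \bZ[\bx]$, and by homogeneity $|v_i| \ll_n H^{\deg f_i}$. Because the $f_i$ form a homogeneous system of parameters, the map $(\alpha^{(i)})_i \mapsto (v_i)_i$ is finite of degree bounded in terms of $n$ alone, so each integer tuple $(v_1, \ldots, v_n)$ recovers the multiset of conjugates, and hence $K$, up to $O_n(1)$ choices. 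Counting integer tuples and absorbing two normalisations---trace fixing removes the $\deg f_1 = 1$ factor corresponding to the canonical linear invariant, while the constraint $|\Del_K| \le X$, together with the relation expressing $\Del_K$ as a fixed polynomial in the $v_i$, cuts out a further codimension-one slice---one obtains
\[
F_n(X;G) \ll_{n,\eps} H^{-1 + \sum_{i=2}^n \deg f_i}\, X^\eps = X^{\frac{1}{2(n-t)}\bigl(-1 + \sum_{i=2}^n \deg f_i\bigr) + \eps}.
\]
Taking the infimum over admissible primary systems yields the exponent $D_G$. For the final assertion, note that $e_1, \ldots, e_n$ are $S_n$-invariant, hence $G$-invariant, algebraically independent, and of degrees $1, 2, \ldots, n$, so they always furnish a permissible system of primary invariants.
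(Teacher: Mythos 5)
The paper does not actually prove this statement: it is Dummit's theorem, imported from \cite{Dum2018}, and the only thing the paper derives from it is Corollary \ref{DummitCor}. So your proposal can only be judged as a reconstruction of Dummit's argument. Its architecture --- a Minkowski-type lemma producing a generator $\alpha$ of $\cO_K$ with conjugates of size $\ll H = X^{1/(2(n-t))}$, followed by evaluating a system of primary invariants at the conjugate vector and counting integer points in a lopsided box --- is indeed the right strategy, and most individual steps are sound (the lower bounds $\lambda_i \gg_n 1$ come from $|\Nm(\beta)| \ge 1$ for $0 \ne \beta \in \cO_K$). One repair in the first step: $\{0,1\}$-combinations need not avoid the union of proper subfields; take coefficients in $\{0,1,\ldots,M\}$ with $M = O_n(1)$ exceeding the number of proper subfields, and note that a subspace of dimension at most $t$ meets the injectively parametrised box $\{\textstyle\sum_{i \le t+1} c_i v_i : 0 \le c_i \le M\}$ in at most $(M+1)^{t}$ points. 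That part is fixable.

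The genuine gap is in the exponent accounting. Counting integer tuples $(v_1,\ldots,v_n)$ with $|v_i| \ll H^{\deg f_i}$ gives $H^{\sum_{i=1}^n \deg f_i}$, and normalising the trace removes exactly one factor $H^{\deg f_1} = H$, leaving $H^{\sum_{i=2}^n \deg f_i}$. For $f_i = e_i$ and $t=1$ this is precisely Schmidt's exponent $\sigma_n$, not $\delta_n$: the additional ``$-1$'' in $D_G$ is the entire content of the improvement over Schmidt, and your justification for it --- that $|\Del_K| \le X$ together with ``the relation expressing $\Del_K$ as a fixed polynomial in the $v_i$'' cuts out a codimension-one slice --- does not hold up. First, $\Del_K$ is not a polynomial in the invariant values: only $\disc(\bZ[\alp]) = [\cO_K : \bZ[\alp]]^2 \Del_K$ is a polynomial in the $e_i$ (and not, in general, in an arbitrary primary system), and since the index is unbounded, $|\Del_K| \le X$ imposes no upper bound on $\disc(\bZ[\alp])$ at all. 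Second, even a genuine constraint of the form $|P(v_1,\ldots,v_n)| \le X$ does not reduce a lattice-point count in a box by a factor of $H$ without further argument, since sublevel sets of polynomials can occupy a positive proportion of the box. So the one step that makes $D_G$ strictly smaller than Schmidt's bound is asserted rather than proved, and you would need to identify and establish the actual mechanism for this extra saving of $H$.
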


\noindent See \cite[\S 3]{Dum2018} for a definition and discussion of primary invariants.

In \S \ref{PrimitiveGroups}, we deduce the following consequence of Dummit's theorem.

\begin{cor} \label{DummitCor} Let $G$ be a proper, transitive subgroup of $S_n$. Then
\[
F_n(X;G) \ll_{n,\eps} X^{\del_n +\eps},
\]
where
\[
\del_n = \sig_n - \: \frac1{2n-2}.
\]
\end{cor}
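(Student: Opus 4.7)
My plan is to split the argument into two cases, according to whether $G$ is primitive on $\{1,\dots,n\}$ or not. In the primitive case, $K$ admits no intermediate subfield, so the parameter $t$ in Theorem~\ref{DummitThm} equals $1$. Using the universally valid bound $\deg(f_i) \le i$ permitted by Dummit's theorem, the right-hand side of the formula for $D_G$ becomes
\[
D_G \le \frac{1}{2(n-1)}\Bigl(-1 + \sum_{i=2}^{n} i\Bigr) = \frac{n^2+n-4}{4(n-1)}.
\]
A straightforward rearrangement, writing $n^2+n-4 = (n-1)(n+2) - 2$, shows this equals $\frac{n+2}{4} - \frac{1}{2(n-1)} = \del_n$, and so Theorem~\ref{DummitThm} yields the claimed bound in this case.

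In the imprimitive case I would exploit the associated block system. Let $d, m \ge 2$ with $n = md$ be such that $G$ preserves a system of $m$ blocks of size $d$, and let $K' \subseteq K$ denote the corresponding intermediate field, so $[K':\bQ] = m$. The tower formula for discriminants
\[
|\Del_K| = N_{K'/\bQ}(\Del_{K/K'}) \cdot |\Del_{K'}|^{d}
\]
forces $|\Del_{K'}| \le X^{1/d}$. I would then count pairs $(K', K)$ by first invoking Schmidt's theorem over $\bQ$ to bound the number of admissible $K'$, and, for each such $K'$, bounding the number of degree-$d$ extensions with $N(\Del_{K/K'}) \le X/|\Del_{K'}|^{d}$ via a relative Minkowski--Schmidt estimate of the form $\ll (X/|\Del_{K'}|^{d})^{(d+2)/4}$. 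An Abel summation over $K'$, exploiting $\#\{K' : |\Del_{K'}| \le T\} \ll_m T^{(m+2)/4}$, then yields
\[
F_n(X;G) \ll_{n,\eps} X^{\max\{(d+2)/4,\; (m+2)/(4d)\} + \eps}.
\]
The critical case is $d = n/2$, $m = 2$, where the maximum equals $(n+4)/8$; a short computation reduces the desired inequality $(n+4)/8 \le \del_n$ to $n(n-1) \ge 4$, which is automatic for $n \ge 4$ (and the imprimitive case is vacuous for smaller $n$).

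The step I expect to demand the most attention is the relative counting of degree-$d$ extensions of $K'$, which must be uniform in $K'$ (up to at most $|\Del_{K'}|^{\eps}$ factors) in order for the Abel summation to produce the stated exponent. Since a given $G$ preserves only finitely many block systems, the initial choice of $K'$ introduces only a bounded multiplicative loss, and any residual low-degree cases can be verified by hand.
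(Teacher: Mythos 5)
Your argument is correct and follows essentially the same route as the paper: split according to whether $G$ is primitive, apply Dummit's theorem with $t=1$ and $\deg(f_i)\le i$ in the primitive case (your algebra $\frac{n^2+n-4}{4(n-1)}=\del_n$ matches the paper's), and bound imprimitive fields through their proper subfields in the imprimitive case. The only difference is cosmetic: the paper simply cites Schmidt's Equation (1.2) together with Lemma~\ref{PrimitiveLemma} to get the bound $O_n(X^{(n/2+2)/4})$ for imprimitive $G$, whereas you sketch the derivation of that bound via the discriminant tower formula and a relative Schmidt count.
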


Observe that $\del_n < \ome_n$, so Dummit's bound for $F_n(X;G)$ is also smaller than the Bhargava--Shankar--Wang bound for $F_n(X)$. Recently, Bhargava \cite[Theorem 20]{Bha} produced the following improvement. To state it, we require the notion of a primitive group \cite{Cox, DM1996}.
\begin{enumerate}[(a)]
\item A transitive group $G \le S_n$ is \emph{imprimitive} if $\{1,2,\ldots,n\}$ is the disjoint union of non-empty sets $R_1,\ldots,R_k$, where $k \ge 2$ and $\max |R_j| \ge 2$, such that if $\tau \in G$ and \mbox{$i \in \{1,2,\ldots,k\}$} then $\tau(R_i) = R_j$ for some $j \in \{1,2,\ldots,k\}$.
\item A transitive group $G \le S_n$ is \emph{primitive} if it is not imprimitive.
\end{enumerate}
Note that if $G$ is imprimitive then the blocks $R_1,\ldots,R_k$ all have the same size.

\begin{example}
The transitive subgroups of $S_4$ are $S_4, A_4, D_4, V_4$, and $C_4$, where $D_4$ denotes the dihedral group of order 8 and $V_4$ denotes the Klein four-group. Observe that 
\[
D_4 = \langle (1234), (13) \rangle
\]
preserves the partition
\[
R_1 = \{ 1,3 \}, \qquad
R_2 = \{ 2, 4 \}
\]
and is therefore imprimitive. Since $V_4$ and $C_4$ are subgroups of $D_4$, they too are imprimitive. On the other hand, the group $A_4$ contains all three-cycles on four elements and therefore does not preserve any non-trivial partition. Therefore $A_4$ is primitive and so too is $S_4$.
\end{example}

\begin{thm} [Bhargava]
Let G be a primitive permutation group on n letters. Then
\[
F_n(X;G) \ll_{n,\eps} X^{B + \eps},
\]
where
\[
B = B(G) = \sig_n - 1 + \ind(G)^{-1}.
\]
\end{thm}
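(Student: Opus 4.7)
The plan is to adapt Schmidt's geometry-of-numbers argument, sharpened by the invariant theory available to a primitive Galois group. For each $K \in \cF_n(X;G)$ I produce a small generator of $K$, extract an additional algebraic constraint from an element of $G$ realising $\ind(G)$, and then count pairs (minimal polynomial, invariant value) via a hypersurface estimate.

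Primitivity first furnishes the small generator: the point stabiliser is a maximal subgroup of $G$, so by Galois theory $K$ admits no proper intermediate subfield $\bQ \subsetneq L \subsetneq K$, and every element of $\cO_K \setminus \bZ$ automatically generates $K$. Applying Minkowski's theorem to the rank-$(n-1)$ lattice $\cO_K / \bZ$, whose covolume in the Minkowski embedding is $\asymp \sqrt{|\Del_K|}$, produces a generator $\alpha \in \cO_K$ with $\|\alpha\|_\infty \ll X^{1/(2(n-1))} =: B$. The integer translates $\alpha + k$ (with $|k| \ll B$) contribute $\gg B$ distinct generators of comparable height per field, so after the usual bookkeeping
\[
F_n(X;G) \cdot B \ll_n \#\bigl\{f \in \bZ[x] \text{ monic}, \, \deg f = n : |c_i(f)| \ll B^i, \, \Gal(f) \le G \bigr\}.
\]

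To bound the right-hand side I invoke the invariant theory of $G$. Fix $g \in G$ with $\ind(g) = \ind(G) = \lam$; the element $g$ has $n - \lam$ orbits on $\{1, \ldots, n\}$. Choosing one variable per orbit yields a $\langle g \rangle$-invariant monomial $q \in \bZ[x_1, \ldots, x_n]$, and averaging over cosets of $\langle g \rangle$ in $G$ produces a $G$-invariant polynomial $P$ that is not $S_n$-invariant. The associated $G$-resolvent
\[
R_P(Y; c_1, \ldots, c_n) = \prod_{\sig \in S_n/G} \bigl(Y - P^\sig(\alpha^{(1)}, \ldots, \alpha^{(n)})\bigr) \in \bZ[c_1, \ldots, c_n][Y]
\]
has a rational integer root $y = P(\alpha^{(1)}, \ldots, \alpha^{(n)})$ whenever $\Gal(f) \le G$. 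The count of admissible $f$ is therefore bounded by the number of integer pairs $(\bc, y)$ on the hypersurface $\{R_P = 0\}$ in the weighted box of sides $(B^i)_{i=1}^n \times B^{\deg P}$. A Schmidt- or Bombieri--Pila-type hypersurface estimate yields a saving of $B^{2(n-1)(1 - 1/\lam)}$ relative to the unconstrained count $B^{n(n+1)/2}$; combining with the $B$-factor from translations and substituting $B = X^{1/(2(n-1))}$ delivers the target bound $F_n(X;G) \ll_{n,\eps} X^{\sig_n - 1 + 1/\lam + \eps}$.

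The principal obstacle is the quantitative estimate in the final step: extracting precisely the saving $2(n-1)(1 - a(G))$ in the exponent of $B$ from the resolvent hypersurface. This requires detailed control of the degree of $R_P$ in $Y$ and in the $c_i$, of its singular locus, and of integer points on the associated weighted hypersurface, all in terms of the permutation group $G$. Primitivity is essential throughout: it guarantees the small generator via the absence of intermediate subfields, and it ensures that $\bZ[x_1, \ldots, x_n]^G$ contains a non-symmetric invariant whose associated resolvent is sharp enough to deliver the claimed exponent $B(G) = \sig_n - 1 + a(G)$.
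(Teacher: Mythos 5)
This statement is quoted from Bhargava \cite[Theorem 20]{Bha}; the paper supplies no proof of it, so your attempt can only be judged on its own terms. As it stands it has a genuine gap at the decisive step. The preliminary reductions are fine in spirit: primitivity does imply that $K$ has no proper subfields, so any $\alp \in \cO_K \setminus \bZ$ generates $K$, and Minkowski plus the translation trick recovers exactly Schmidt's bound $X^{\sig_n}$ (the factor $B$ you gain from the translates $\alp+k$ cancels the range of the trace coefficient and nothing more). The entire content of the theorem is the further saving of $X^{1-a(G)} = B^{2(n-1)(1-1/\lam)}$, and this is precisely the step you wave at with ``a Schmidt- or Bombieri--Pila-type hypersurface estimate.'' No such off-the-shelf estimate delivers it. The resolvent locus $\{R_P=0\}$ has one more variable than the coefficient box, so a crude hypersurface count gives no saving at all; what one must show is that for all but $O(B^{n(n+1)/2 - 2(n-1)(1-1/\lam)})$ choices of $\bc$ the fibre in $y$ is empty, and the determinant-method tools available (e.g.\ Theorem~\ref{BrowningThm}, Theorem~\ref{LopsidedBP}) yield savings of the shape $X^{c/\sqrt{d}}$ or $X^{1/d}$ in the degree $d$ of the resolvent, which has no a priori relation to $\ind(G)$. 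In Bhargava's actual argument the index enters through an entirely different mechanism (the codimension of the discriminant stratum attached to the splitting type of a minimal-index element, detected prime by prime), not through the degree or singular locus of a $G$-resolvent.

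Two smaller points. First, averaging a $\langle g\rangle$-invariant monomial over cosets of $\langle g\rangle$ in $G$ need not produce a $G$-invariant that is non-$S_n$-invariant, and in any case the existence of a proper $G$-invariant holds for every proper subgroup $G < S_n$, primitive or not; so your closing claim that primitivity ``ensures a sharp enough resolvent'' attributes to primitivity something it does not do. Its only role here is the subfield-free reduction. Second, your displayed inequality bounds $F_n(X;G)\cdot B$ by a count of $f$ with $\Gal(f) \le G$ rather than $= G$; that is harmless for an upper bound but should be said. The fix is not cosmetic: you would need to import Bhargava's polynomial-counting theorem (or reprove it), at which point the argument is his.
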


Though Bhargava restricts attention to the case of primitive permutation groups, no generality is lost in doing so, as we now explain. A number field is \emph{primitive} if there are no fields strictly between it and the rationals. As noted in \cite{BSW2022}, imprimitive fields can be counted via their proper primitive subextensions \cite[Equation (1.2)]{Sch1995b}, giving rise to the bound
\[
O_n(X^{(n/2+2)/4}) = O_n(X^{(n+4)/8}).
\]
Moreover, it is known that primitive fields have primitive Galois groups, and for completeness we provide a proof of this in \S \ref{PrimitiveGroups}. Hence, we reach the following conclusion.

\begin{lemma} \label{imprim}
If $G \le S_n$ is imprimitive then
\[
F_n(X;G) \ll_n X^{(n+4)/8}.
\]
\end{lemma}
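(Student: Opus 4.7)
The plan is to exploit the subfield structure forced by imprimitivity and apply Schmidt's bound at each step of the resulting tower. Since $G$ is imprimitive with a block system consisting of $k$ blocks of common size $m=n/k$, where $2 \le k \le n/2$ and $k \mid n$, the Galois correspondence for $\hat K/\bQ$ provides each $K \in \cF_n(X;G)$ with a proper intermediate subfield $\bQ \subsetneq K' \subsetneq K$ satisfying $[K':\bQ]=k$ and $[K:K']=m$. The main input is the discriminant tower formula
\[
|\Del_K| = |\Del_{K'}|^{m} \cdot \Nm_{K'/\bQ}(\fd_{K/K'}),
\]
which forces $|\Del_{K'}| \le X^{1/m}$ and, once $K'$ is fixed, $\Nm_{K'/\bQ}(\fd_{K/K'}) \le X/|\Del_{K'}|^m$.

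I would then apply Schmidt's theorem at each level of the tower. Over $\bQ$, the number of degree-$k$ fields $K'$ with $|\Del_{K'}| \in [D,2D]$ is $O_n(D^{(k+2)/4})$. In its relative form---the same Minkowski-lattice argument run over $\cO_{K'}$, with implied constant uniform in $K'$---the number of degree-$m$ extensions $K/K'$ satisfying $\Nm_{K'/\bQ}(\fd_{K/K'}) \le Z$ is $O_n(Z^{(m+2)/4})$. Combining these bounds and summing dyadically over $D$ yields
\[
F_n(X;G) \ll_n X^{(m+2)/4} \sum_{D = 2^j \le X^{1/m}} D^{(k+2)/4 - m(m+2)/4}.
\]

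The exponent $c := (k+2)/4 - m(m+2)/4$ controls the outcome. For $c<0$ the geometric series is $O_n(1)$ and the bound is $X^{(m+2)/4}$; for $c>0$ the sum is dominated by $D=X^{1/m}$, giving $X^{k(k+2)/(4n)}$; and $c=0$ produces a logarithmic factor that is absorbed into a slightly weaker polynomial exponent. In all three cases, the exponent of $X$ is at most $\max\{(m+2)/4,\ k(k+2)/(4n)\}$. For $k \ge 2$, $(m+2)/4 = (n/k+2)/4 \le (n+4)/8$, with equality at $k=2$ (where $c<0$, so no log loss at the critical exponent). For $k \le n/2$, $k(k+2)/(4n) \le (n+4)/16$. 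Taking the maximum over the finitely many divisors $k$ of $n$ in $[2, n/2]$ gives the claimed bound.

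The main obstacle is establishing the relative Schmidt bound with implied constant depending only on $n$ and not on the particular $K'$; this is essentially equation (1.2) of \cite{Sch1995b}, and it follows by running Schmidt's Minkowski-lattice argument over $\cO_{K'}$ while carefully tracking that all emergent constants depend only on the degrees $k$ and $m$. A secondary technical point is to verify that the extremal case $k=2$ yields $c<0$ (rather than $c=0$) so that no logarithmic factor appears at the critical exponent $(n+4)/8$; this is a direct arithmetic check, since $c = 1 - (n/2)(n/2+2)/4 < 0$ for all $n \ge 4$.
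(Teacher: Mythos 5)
Your overall route is the same as the paper's: an imprimitive Galois group forces a proper subfield $K'$ of degree $k$ equal to the number of blocks (this is the content of Lemma \ref{PrimitiveLemma}), and one then counts imprimitive fields through the tower $\bQ \subset K' \subset K$ using Schmidt-type estimates at each level. The paper does not carry this out itself; it simply cites \cite[Equation (1.2)]{Sch1995b} for the bound $O_n(X^{(n/k+2)/4})$ on degree-$n$ fields possessing a degree-$k$ subfield, and your closing verification that $(n/k+2)/4 \le (n+4)/8$ for $k \ge 2$ is correct and is all that the paper adds.

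The gap is in the step you yourself single out as the main obstacle: the relative Schmidt bound, as you state it, is false. The number of degree-$m$ extensions $K/K'$ with $\Nm_{K'/\bQ}(\fd_{K/K'}) \le Z$ is not $O_n(Z^{(m+2)/4})$ uniformly in $K'$. Taking $Z=1$, this would bound the number of everywhere-unramified degree-$m$ extensions of $K'$ by $O_n(1)$; but already for $m=2$, genus theory (equivalently, $2$-torsion in the narrow class group) produces $\gg 2^{\omega(\Del_{K'})}$ such extensions, which is unbounded as $K'$ varies over fields of fixed degree $k$. The reason the Minkowski-lattice argument over $\cO_{K'}$ cannot deliver a bound depending only on $\Nm(\fd_{K/K'})$ is that the Hunter--Martinet box for a relative generator has size governed by $|\Del_K|/|\Del_{K'}|$, and counting elements of $\cO_{K'}$ in archimedean boxes introduces the covolume $|\Del_{K'}|^{1/2}$ together with an unavoidable $+O(1)$ per coefficient; the resulting bound necessarily carries explicit powers of $|\Del_{K'}|$. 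Consequently your dyadic sum $\sum_D D^{(k+2)/4 - m(m+2)/4}$ is not justified as written: the correct exponent of $D$ is different, and the optimization must be redone (it does still close, but that closing is precisely Schmidt's Equation (1.2), i.e.\ the statement you are trying to establish). The cleanest repair is to cite \cite[Equation (1.2)]{Sch1995b} directly, as the paper does, and retain only your block-system and elementary exponent computations.
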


For $d \in \bN$, define
\[
B_n(d) = \sqrt{\frac{n}{4(n-1)d}},
\qquad
N_n = \frac{n}{4n-4}.
\]
Recall that $F_n(X) \ll_n X$ for $n \le 5$. We establish the following bound.

\begin{thm} \label{MainThm} Assume that $n \ge 6$, let $G$ be a proper, transitive subgroup of $S_n$, and put $d = [S_n:G]$. Assume that $G < A_n$ and
\[
n \notin 
\bigcup_{m \text{ odd}} 
\{ m^2, m^2 + 1 \}.
\]
Then
\[
F_n(X;G) \ll_{n,\eps} X^{E + \eps},
\]
where
\[
E = E(G) = \sig_n - 1 - \:  \frac1{2n-2} + B_n(d) + N_n.
\]
\end{thm}

This refines Bhargava's bound in three cases, called 6T12, 7T5 and 8T48, see \cite{BM1983}. These are precisely the primitive groups $G$ of degree $n \ge 6$ such that $\ind(G) = 2$ and $G < A_n$, see \cite{BM1983}.

\begin{table}[ht]
\begin{tabular}{|c|c|c|c|c|}
\hline
$G$ & $B(G)$ & $E(G)$ & $|G|$ \\
\hline
6T12 & 1.5 & 1.36 & 60 \\
7T5 & 1.75 & 1.56 & 168 \\
8T48 & 2 & 1.81 & 1344 \\ 
\hline
\end{tabular}
\caption{\label{RecordTable}
Exponents to two decimal places.}
\end{table}
\noindent 

\begin{remark} After the initial release of this manuscript, the upper bound $O(X)$ for 6T12 was asserted in \cite{ALWW}, citing a quintic result \cite{BSW2015}. Having checked the GitHub repository attached to \cite{ALWW}, we believe that our bounds for 7T5 and 8T48 remain the best known.
\end{remark}

\begin{remark}
Our methods are geared towards slight improvements over Schmidt's bound, when the Galois group is fixed. For this reason, there are only a few cases in which we obtain state-of-the-art bounds. In the initial release of this manuscript \cite{arXiv}, we used a different version of the machinery to also obtain a state-of-the-art bound for 6T14. However, we later decided to omit this case, as it was lengthy, computed-assisted, and had been obsoleted by \cite{ALWW}. It is plausible that improvements in the determinant method for threefolds --- such as a proof of the statement analogous to Theorem \ref{BrowningThm} ---
could lead to further developments. 
\end{remark}

In view of Conjecture \ref{WeakMalle}, our bounds are likely far from the truth, since $\ind(G) = 2$ for the groups above. Lower bounds are also interesting \cite{Alb2021, BSW2015, BSW2022, LLOT, PTBW}. All of these counting problems have a rich history, and are intricately related to many other problems in arithmetic statistics \cite{Pie}.

Finally, we note with great interest the programme laid out by Lemke Oliver \cite{LO}, to attack the general number field counting problem. After stratifying by Galois group, an appeal is made to the classification of finite simple groups. Impressive bounds are then obtained in many general families. For example, it is shown that
\[
F_n(X; G) \ll_n X^{14}
\]
if $G$ is solvable.

\subsection{Methods}

Schmidt's approach \cite{Sch1995b} allows us to count primitive number fields with Galois group $G$ by counting polynomials $f$ with Galois group $G$. The coefficients $a_i$ of these polynomials $f$ are integers in lopsided boxes. We then introduce resolvents $\Phi$, which are polynomials in the coefficients $a_i$, and have the property that if $f$ has Galois group $G$ then $\Phi$ has an integer root that is polynomially bounded in terms of the height of $f$. This reduces the problem to counting solutions to a diophantine equation in the $a_i$ and an extra variable $y$.

The idea is to fix all but two of the $a_i$, leaving us to count integer points on a surface cut out by a polynomial which may have large coefficients, up to a given height. We can cover almost all such points by curves using the determinant method, with the number of curves required being uniformly controlled. If our fixed $a_i$ are sufficiently generic, then our curves are non-linear. Finally, we can apply a lopsided version of a celebrated estimate of Bombieri and Pila to count integer points on such a curve. 

These ingredients are presented in \S \ref{ingredients}. Here, the phrase `sufficiently generic' entails in particular that the surface is absolutely irreducible and contains no rational lines. Handling these two aspects is challenging in general. Since our polynomials have even Galois group, we may exploit the fact that the discriminant is a square, to refine the basic strategy described above.

Choosing all but two of the $a_i$ produces a polynomial with Galois group $S_n$ over a two-parameter function field. We do so in two steps, first choosing all but three of the $a_i$, then choosing one more generically. This ensures that a certain resolvent is separable, and we are able to deduce from this the absolute irreducibility of the surface obtained. 

We then apply the determinant method to cover almost all of its integer points up to a given height by curves. If the curve is non-linear, then we can proceed as in the general framework through a lopsided Bombieri--Pila estimate. If the curve is linear, then we can substitute the linear relation into the equation of the discriminant being a square. The polynomial obtained is irreducible, by some previous work of the second-named author \cite{Die2013}, and lopsided Bombieri--Pila prevails.

\subsection{Notation}
We adopt the convention that $\eps$ is an arbitrarily small positive constant whose value may change between occurrences. Throughout $X$ denotes a positive real number, sufficiently large in terms of $n$ and $\eps$. We use the Vinogradov and Bachmann--Landau asymptotic notations.

\subsection{Funding and acknowledgements} 
SC was supported by EPSRC Fellowship Grant EP/S00226X/2 and thanks Samir Siksek for helpful discussions. Both authors are indebted to the Mathematisches Forschungsinstitut Oberwolfach for favourable working conditions, and thank the anonymous referees for constructive comments.

\section{Ingredients}
\label{ingredients}

\subsection{Primitive fields and groups} \label{PrimitiveGroups}

In this subsection, we establish an equivalence between primitive fields and groups, as well as deducing Corollary \ref{DummitCor}.

\begin{lemma} \label{PrimitiveLemma} Let $K$ be a number of field of degree $n$, let $\hat K$ be its Galois closure, and suppose $\Gal(\hat K/\bQ) = G \le S_n$. Then $G$ is primitive if and only if $K$ is primitive.
\end{lemma}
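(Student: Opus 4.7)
The plan is to identify a common characterization of both primitivity notions in terms of the subgroup $H = \Gal(\hat K/K) \le G$: both conditions are equivalent to the statement that $H$ is a \emph{maximal} subgroup of $G$.

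First I would reinterpret the permutation action. The $n$ embeddings $K \hookrightarrow \overline{\bQ}$ are in natural bijection with the left cosets $G/H$: given a distinguished embedding $K \subset \hat K$, the embeddings are $\sig|_K$ for $\sig \in G$, with $\sig|_K = \tau|_K$ iff $\sig^{-1}\tau \in H$. Thus the action of $G$ on the $n$ embeddings of $K$ is isomorphic, as a $G$-set, to the standard left-multiplication action of $G$ on $G/H$.

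Next I would invoke the fundamental theorem of Galois theory for $\hat K/\bQ$: intermediate fields $\bQ \subseteq L \subseteq \hat K$ correspond to subgroups of $G$, with $L = \hat K^M$ matching $M$. Under this correspondence, the condition $\bQ \subseteq L \subseteq K$ translates to $H \leq M \leq G$, because $L \subseteq K$ iff every element of $\Gal(\hat K/K) = H$ fixes $L$ iff $H \le M$. Consequently, $K$ is primitive (i.e., no field lies strictly between $\bQ$ and $K$) if and only if there is no subgroup $M$ with $H \lneq M \lneq G$, i.e., $H$ is a maximal subgroup of $G$.

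Finally I would apply the standard fact from permutation group theory: a transitive action of $G$ on $G/H$ is primitive if and only if the point stabilizer $H$ is maximal in $G$. The blocks of this action containing the coset $H$ are in bijection with the subgroups $M$ satisfying $H \le M \le G$ (the block corresponding to $M$ is $\{mH : m \in M\}$); a non-trivial block system exists iff such an intermediate $M$ exists. Combining this with the previous paragraph yields the equivalence. The only mildly delicate point is the identification of the $G$-action on embeddings with the coset action, but this is routine; no further obstacles arise.
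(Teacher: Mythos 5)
Your proof is correct. The identification of the $G$-set of embeddings with the coset space $G/H$ for $H=\Gal(\hat K/K)$ is right, the Galois correspondence does match fields between $\bQ$ and $K$ with subgroups between $H$ and $G$, and the criterion ``transitive action primitive $\iff$ point stabiliser maximal'' (with blocks through the point $H$ corresponding bijectively to intermediate subgroups $M$ via $\{mH: m\in M\}$) is the standard fact you cite; note also that maximality of the stabiliser is conjugation-invariant, so the paper's convention that $G$ is only determined up to conjugacy causes no trouble. The paper reaches the same conclusion by a more hands-on route that avoids quoting the maximal-stabiliser criterion: in one direction it takes a block $R_1$, forms $g(x)=\prod_{\tet\in R_1}(x-\tet)$, and shows the field generated by its coefficients sits strictly between $\bQ$ and $K$ (the strictness $N\ne K$ requiring a small orbit-counting argument); in the other it takes an intermediate field $N$, sets $H=\Gal(\hat K/N)$, and verifies directly that $\Del=H\alp$ is a block of size $[K:N]$. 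This is exactly the content of the two standard facts you invoke, unwound into explicit constructions. Your version is shorter and more conceptual at the cost of relying on the textbook block--subgroup dictionary; the paper's is self-contained. Either is acceptable; there is no gap in your argument.
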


\begin{proof} By the primitive element theorem, we have $K = \bQ(\alp)$ for some $\alp = \tet_{1,1} \in K$. Let $\iota_1,\ldots,\iota_n$ be the embeddings of $K$ into $\hat K$. 

($\Leftarrow$) First suppose $G$ is imprimitive. Let $f$ be the minimal polynomial of $\alp$. The conjugates of $\alp$ are the other roots of $f$, namely $\iota_i \alp$ ($1 \le i \le n$). Hence $G$ acts transitively and imprimitively on these roots. Thus, for some integer $k \in [2,n-1]$ dividing $n$, and for $m = n/k$, we have an induced action of $G$ on $\{R_1,\ldots,R_k\}$, where
\[
R_i = \{ \tet_{i,1}, \ldots, \tet_{i,m} \} \qquad (1 \le i \le k).
\]
Define $g(x) = \prod_{\tet \in R_1} 
(x-\tet)$, and let $N$ be the field generated by the coefficients of $g$ over $\bQ$. Then $N$ contains $\bQ$. This containment is strict, as $f$ is irreducible, and moreover
\[
N \le \hat K^{\Fix(R_1)} \le \hat K^{\Fix(\alp)} = \hat K^{\Fix(K)}
= K.
\]
Assume for a contradiction that $N = K$. Then $\alp \in N$, so $\alp$ is a symmetric polynomial in the elements of $R_1$. Consequently, if \mbox{$\sig_1,\sig_2 \in G$} and $\sig_1(R_1) = \sig_2(R_1)$ then $\sig_1(\alp) = \sig_2(\alp)$, whence 
$|G \alp| \le k$. On the other hand, as $G$ acts transitively on the roots of $f$, we have 
$|G \alp| = n > k$, contradiction. Hence $N$ lies strictly between $K$ and $\bQ$, and so $K$ is imprimitive.

($\Rightarrow$) Now suppose instead that $K$ is imprimitive. Then there exists a field $N$ strictly between $K = \bQ(\alp)$ and $\bQ$. Put
\[
H = \Gal(\hat K/N),
\qquad
\Del = H \alp.
\]
Note from the Galois correspondence that 
\[
\Fix(\alp) \le H,
\]
and let $\sig_1, \ldots, \sig_m$ be left coset representatives for $\Fix(\alp)$ in $H$. Then
\[
\Del = \{ \sig_1 \alp, \ldots, \sig_m \alp \},
\]
so
\[
|\Del| = m = [H: \Fix(\alp)] = [K:N] \in [2,n-1].
\]

We claim that $\Del$ is a block for the action of $G$, meaning that if $g \in G$ then 
\[
g \Del = \Del
\qquad
 \text{or}
\qquad
g \Del \cap \Del = \emptyset.
\]
To see this, suppose $g\Del$ intersects $\Del$. Then $gh_1 \alp = h_2 \alp$ for some $h_1, h_2 \in H$. Then
\[
h_2^{-1} g h_1 \in \Fix(\alp) \le H,
\]
and if $h \in H$ then 
\[
g (h \alp) = 
h_2 (h_2^{-1} g h_1) h_1^{-1} h \alp \in H \alp = \Del.
\]
Hence $g \Del \subseteq \Del$, and so $g \Del = \Del$, confirming the claim.

Consequently, if $g_1, g_2 \in G$ then
\[
g_1 \Del = g_2 \Del
\qquad
\text{or}
\qquad g_1 \Del \cap g_2 \Del = \emptyset.
\]
Thus, the sets $g \Del$ for $g \in G$ partition the conjugates of $\alp$ into
$k = n/m$ disjoint sets
\[
R_1, \ldots, R_k,
\]
each of size $m$. Finally, observe that if 
$i \in \{1,2,\ldots,k\}$ and $\tau \in G$ then 
$\tau(R_i) = R_j$ for some $j \in \{1,2,\ldots,k\}$, and so $G$ is imprimitive.
\end{proof}

\begin{proof} [Proof of Corollary \ref{DummitCor}]
By \cite[Equation (1.2)]{Sch1995b} and Lemma~\ref{PrimitiveLemma}, if $G$ is imprimitive then
\[
F_n(X;G) \ll_n X^{(n/2+2)/4}.
\]
If $G$ is primitive then, by Lemma \ref{PrimitiveLemma}, we have $t=1$ in Theorem \ref{DummitThm}, whence
\[
F_n(X;G) \ll_{n,\eps} X^{D+\eps},
\]
where
\[
D = \frac1{2n-2} \left( \frac{n(n+1)}2 \: - 2 \right)
= \frac{n+2}{4} \: - \: \frac1{2n-2} = \del_n.
\]
\end{proof}

\subsection{From number fields to polynomials}

Given an irreducible polynomial $f(x) \in \bZ[x]$, we denote its Galois group over $\bQ$ by $G_f$. This is the Galois group of its splitting field. Thus, if $f$ has degree $n$, then $G_f$ is a transitive subgroup of $S_n$ via its action on the roots of $f$. Recently, there have been several articles counting polynomials with prescribed Galois group~\cite{And2021, BarySoroker, CD2020, CD, Xiao}, including the resolution of a weak form of van der Waerden's conjecture by Bhargava \cite{Bha}. The polynomial counting problem that arises in the present work differs in that the coefficients are drawn from lopsided boxes.

Schmidt \cite{Sch1995b} uses the geometry of numbers to pass from counting number fields to counting polynomials. We extract a traceless version of this key ingredient from \cite[\S 2]{LOT2022}.

\begin{lemma} \label{traceless} Let $K$ be a primitive number field of degree $n$, and let $\cO_K$ be its ring of integers. Then there exists $\alp \in \cO_K$ whose minimal polynomial is
\begin{equation} \label{form}
x^n + a_2 x^{n-2} + a_3 x^{n-3} + \cdots + a_n \in \bZ[x],
\end{equation}
where
\begin{equation*}
a_j \ll |\Del_K|^{j/(2n-2)}
\qquad (2 \le j \le n).
\end{equation*}
\end{lemma}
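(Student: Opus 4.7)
The plan is to obtain $\alpha$ by a direct geometry-of-numbers argument on the traceless sublattice of $\cO_K$, following the classical Minkowski-type lemma but refining the ambient lattice so that the trace vanishes automatically. Concretely, embed $\cO_K$ as a lattice in Minkowski space $K_\bR \cong \bR^{r_1} \times \bC^{r_2}$ of dimension $n$, equipped with the standard inner product so that $\cO_K$ has covolume $\asymp |\Del_K|^{1/2}$. Inside this space, the trace is a nonzero $\bQ$-linear functional $\Tr: K_\bR \to \bR$, and the sublattice
\[
\Lam = \{\alp \in \cO_K : \Tr(\alp) = 0\}
\]
is a rank-$(n-1)$ lattice sitting in the hyperplane $H = \ker \Tr$.

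Next, I would compute (or invoke a standard formula for) the covolume of $\Lam$ inside $H$: if $\Tr(\cO_K) = c\bZ$, then
\[
\vol(H / \Lam) = \frac{\|\Tr\| \cdot \vol(K_\bR / \cO_K)}{c} \ll |\Del_K|^{1/2},
\]
since $\|\Tr\|$ and $c$ are bounded in terms of $n$. Applying Minkowski's convex body theorem to a box (or ball) in $H$ symmetric about the origin, with each of the $n$ coordinate widths chosen to be $\asymp |\Del_K|^{1/(2(n-1))}$ --- the constraint $\Tr = 0$ cuts one dimension, so the relevant total volume is the $(n-1)$-dimensional volume $\asymp |\Del_K|^{1/2}$, exceeding $2^{n-1}\vol(H/\Lam)$ by a constant factor depending only on $n$ --- yields a nonzero $\alp \in \Lam$ with
\[
|\sig_i(\alp)| \ll |\Del_K|^{1/(2n-2)} \qquad (1 \le i \le n).
\]

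To finish I need $\alp$ to generate $K$ over $\bQ$. Since $\Tr(\alp) = 0$ and $\alp \neq 0$, $\alp$ cannot lie in $\bQ$ (otherwise $\Tr(\alp) = n\alp \neq 0$). Because $K$ is primitive, any element of $K \setminus \bQ$ generates $K$, so the minimal polynomial of $\alp$ has degree $n$ and, by the trace vanishing, no $x^{n-1}$ term, giving the shape \eqref{form}. The coefficient bounds follow by expressing each $a_j$ as the $j$-th elementary symmetric function of the conjugates $\sig_i(\alp)$, whence
\[
|a_j| \le \binom{n}{j} \max_i |\sig_i(\alp)|^j \ll_n |\Del_K|^{j/(2n-2)}.
\]
The main technical point --- and the only step that needs any care --- is the covolume calculation for $\Lam$ inside $H$ with the correct constant, i.e.\ verifying that the volume budget in Minkowski's theorem really does allow each coordinate width to be of size $|\Del_K|^{1/(2n-2)}$ rather than a larger exponent; everything else is a routine application of Minkowski and primitivity.
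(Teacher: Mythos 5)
Your argument is correct, and it is essentially the argument the paper relies on: the lemma is quoted from Lemke Oliver--Thorne \cite[\S 2]{LOT2022}, whose proof is exactly this Minkowski argument on the trace-zero hyperplane. The only (harmless) variation is that you apply Minkowski to the sublattice $\{\alp \in \cO_K : \Tr(\alp)=0\}$ of covolume $\ll_n |\Del_K|^{1/2}$, whereas the cited source works with the projection of $\cO_K$ onto the hyperplane and then rescales by $n$ to land back in $\cO_K$; both routes give conjugate bounds $\ll_n |\Del_K|^{1/(2n-2)}$, and your use of primitivity to ensure $\bQ(\alp)=K$ and the elementary symmetric function bounds for the $a_j$ are exactly as intended.
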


This enables the passage to counting polynomials as follows.

\begin{cor} \label{StartingPoint} Let $G$ be a primitive subgroup of $S_n$. Then $F_n(X;G)$ is bounded above by the number of irreducible polynomials
\eqref{form} with Galois group $G$ satisfying 
\begin{equation} \label{ranges}
|a_j| \le CX^{j/(2n-2)}
\qquad (2 \le j \le n).
\end{equation}
\end{cor}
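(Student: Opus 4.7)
The plan is to convert each number field $K\in\cF_n(X;G)$ into a single integer polynomial of the prescribed shape and then observe that distinct fields yield distinct polynomials. The main inputs are Lemma~\ref{traceless} and the fact, proved in Appendix~\ref{PrimitiveGroups}, that primitivity of the permutation group $G$ on the embeddings of $K$ forces $K$ itself to be a primitive number field (i.e.\ to have no proper intermediate subextensions), so that Lemma~\ref{traceless} is applicable.

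First, I would fix $K\in\cF_n(X;G)$ and invoke Lemma~\ref{traceless} to produce an algebraic integer $\alp\in\cO_K$ whose minimal polynomial $f$ over $\bQ$ has the traceless shape \eqref{form} with
\[
|a_j|\ll|\Del_K|^{j/(2n-2)}\le C\,X^{j/(2n-2)}\qquad(2\le j\le n),
\]
which is precisely \eqref{ranges} for a suitable constant $C=C_n$. Because $f$ is a minimal polynomial of degree $n$, it is automatically irreducible in $\bZ[x]$, and $\bQ(\alp)=K$. Thus the map $K\mapsto f$ (after choosing one such $\alp$ for each $K$) is well defined.

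Next, I would check that $f$ has Galois group exactly $G$ as a transitive subgroup of $S_n$. The splitting field of $f$ is the Galois closure $\hat K$ of $K$ over $\bQ$, and the natural action of $\Gal(\hat K/\bQ)$ on the $n$ roots of $f$ coincides with its action on the $n$ embeddings $K\hookrightarrow\overline{\bQ}$. By the convention in \eqref{GExt}, this permutation group is $G$ up to conjugacy, so $G_f=G$ as required.

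Finally, I would note that the correspondence $K\mapsto f$ is injective, since $f$ determines $K=\bQ(\alp)$ uniquely via any of its roots. Hence $F_n(X;G)$ is bounded above by the cardinality of the set of traceless irreducible polynomials \eqref{form} with Galois group $G$ satisfying \eqref{ranges}. There is essentially no obstacle here: the content of the corollary is entirely a bookkeeping consequence of Lemma~\ref{traceless} together with the equivalence between primitivity of $G$ and primitivity of $K$; the only point requiring any care is ensuring that Lemma~\ref{traceless} is genuinely available for every $K\in\cF_n(X;G)$, which is what the primitivity hypothesis on $G$ gives us.
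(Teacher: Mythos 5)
Your argument is correct and follows essentially the same route as the paper's own proof: apply Lemma~\ref{traceless} (justified via Lemma~\ref{PrimitiveLemma}, which converts primitivity of $G$ into primitivity of $K$), observe that the resulting minimal polynomial is irreducible with splitting field $\hat K$ and hence Galois group $G$, and conclude by injectivity of $K \mapsto f$. The only cosmetic difference is that you deduce $\bQ(\alp)=K$ from the degree of the minimal polynomial rather than from the primitivity of $K$; both are valid.
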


\begin{proof} Lemma \ref{traceless} defines a function from $\cF_n(X;G)$ to the set of irreducible polynomials \eqref{form} with coefficients satisfying \eqref{ranges}. The image of $K$ is the minimal polynomial of some $\alp \in K \setminus \bQ$, where $K = \bQ(\alp)$ by Lemma \ref{PrimitiveLemma}. The polynomial must have splitting field $\hat K$ and therefore Galois group $G$. The function bijects onto its image, since we have $K = \bQ(\alp)$ whenever $K$ is sent to the minimal polynomial of $\alp$. We have shown that $\cF_n(X;G)$ injects into the set of irreducible polynomials \eqref{form} with Galois group $G$ and coefficients in the ranges \eqref{ranges}.
\end{proof}

Corollary \ref{StartingPoint} directly implies Schmidt's bound $F_n(X;G) \ll X^{\sig_n}$, and serves as the starting point for the proof of Theorem \ref{MainThm}.

\subsection{Galois theory over function fields}

There is a close connection between the Galois group of a polynomial over a function field, say $f(\bt, x) \in \bF(\bt)[x]$, for some field $\bF$, and its Galois group when the variables $\bt$ are specialised. Specifically, the Galois group of a specialisation should generically be the same as the Galois group in the function field $\bF(\bt)$. Results of this kind date back to Hilbert's irreducibility theorem. When it comes to quantitative bounds on the number of exceptional specialisations, considerable progress was made by Cohen in the late 1970s and 1980s, see in particular \cite{Coh1981}.
We require two preliminary results from this topic. 

The following is a special case of a result over two-parameter function fields \cite[Theorem 1]{Coh1980}.

\begin{thm}
\label{TwoParameter}
Let $\bF$ be a field of characteristic zero, and let
\[
g(x) = x^n + a_1 x^{n-1} + \cdots + a_{n-2} x^2 \in \bF[x].
\]
Then the Galois group of
\[
g(x) + t x + u
\]
over $\bF(t,u)$ is $S_n$.
\end{thm}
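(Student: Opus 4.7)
The plan is to apply Jordan's classical theorem that any primitive subgroup of $S_n$ containing a transposition equals $S_n$. Set $G := \Gal(f/\bF(t,u))$ with $f(x) = g(x) + tx + u$; I need to verify that $G$ is transitive, primitive, and contains a transposition.

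First, $f$ is irreducible over $\bF(t,u)$: viewing $f \in \bF[t,u][x]$, it has $u$-degree one with leading $u$-coefficient $1$, so in any factorisation $f = AB$ in $\bF[t,u,x]$ one factor, say $B$, has $u$-degree zero, and comparing $u^1$-coefficients forces $B$ to be a unit. Gauss's lemma then yields irreducibility in $\bF(t,u)[x]$, so $G$ is transitive. For primitivity I would upgrade to $2$-transitivity: let $\alpha$ be a root of $f$ in a splitting field, so $\bF(t,u,\alpha) = \bF(t,\alpha)$ (since $u = -g(\alpha) - t\alpha$), with $t,\alpha$ algebraically independent over $\bF$. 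The complementary factor
\[
\tilde{f}(x) = \frac{f(x)}{x - \alpha} = \frac{g(x) - g(\alpha)}{x - \alpha} + t
\]
lies in $\bF[\alpha, x][t]$ and is again linear in $t$ with unit leading coefficient, hence irreducible in $\bF[\alpha, t, x]$ by the same Gauss argument and therefore irreducible over $\bF(t, \alpha)$. Thus $G$ is $2$-transitive, and in particular primitive, on the roots of $f$.

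To exhibit a transposition, I would analyse the ramification of the branched cover $\{f = 0\} \to \bA^2_{t,u}$ along the discriminant locus $\{\Res_x(f, f') = 0\}$. The curve $C$ parametrized by $x_0 \mapsto (t_0, u_0) = (-g'(x_0),\, x_0 g'(x_0) - g(x_0))$ lies on this locus, corresponding to $f(\cdot, t_0, u_0)$ acquiring $x_0$ as a double root. Since $g''$ has degree $n-2$ and is not identically zero, we have $g''(x_0) \ne 0$ for generic $x_0$, so the double root is simple; moreover, no accidental second double root appears at a generic point of $C$, as this would be a codimension-two condition on $x_0$. The inertia at a generic point of $C$ therefore swaps only the two colliding roots, providing a transposition in $G$.

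The main obstacle is the ramification analysis: one must verify that simple ramification genuinely occurs on at least one component of the discriminant locus, ruling out pathological scenarios such as multiple double roots colliding or $g''$ vanishing on a whole component of the branch curve. Once such a transposition has been exhibited, Jordan's theorem on primitive subgroups of $S_n$ containing a transposition immediately yields $G = S_n$.
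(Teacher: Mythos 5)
The paper does not actually prove Theorem \ref{TwoParameter}: it is imported verbatim as a special case of \cite[Theorem 1]{Coh1980}, so there is no in-house argument to compare against. Your strategy --- irreducibility, then $2$-transitivity via the complementary factor $f(x)/(x-\alpha)$ over $\bF(t,\alpha)$, then a transposition from inertia over the branch curve, then Jordan's theorem --- is the standard route to such statements, and your first two steps are complete and correct as written.

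The gap is the one you flag yourself, and it is genuine: the assertion that the fibre over a generic point of $C$ has a \emph{unique} double root. That fibre is $h(x)=g(x)-g(x_0)-g'(x_0)(x-x_0)$, and a second double root $x_1\ne x_0$ is cut out by the two conditions $g'(x_1)=g'(x_0)$ and $g(x_1)-g(x_0)=g'(x_0)(x_1-x_0)$. These are two conditions on the \emph{pair} $(x_0,x_1)\in\bA^2$, not on $x_0$ alone, so ``codimension two in $x_0$'' is not a valid reading: after removing the factors of $x_1-x_0$, the two conditions could still share a one-dimensional common component dominating the $x_0$-line, in which case a second double root would occur at \emph{every} generic point of $C$ and the inertia generator would be a product of two transpositions. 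Geometrically you are asserting that the graph of $g$ has no one-parameter family of bitangent lines; this genuinely fails for some smooth plane curves in positive characteristic (non-reflexive curves), so the step has real content and must use characteristic zero. A short fix: if such a component existed, then along it $x_1$ would be a separable algebraic function of $x_0$; differentiating $g(x_1)-g(x_0)=g'(x_0)(x_1-x_0)$ with respect to $x_0$ and substituting $g'(x_1)=g'(x_0)$ yields $g''(x_0)(x_1-x_0)=0$, whence $x_1=x_0$ because $g''\ne 0$ and $x_0$ is generic --- a contradiction. (Alternatively one may quote the reflexivity theorem for plane curves in characteristic zero.) With that inserted, the inertia at the generic point of $C$ is indeed generated by a single transposition and your argument closes.
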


We also use one-parameter function fields, specifically \cite[Lemma 2]{Die2012} as stated below.

\begin{lemma} \label{OneParameter}
Let $n, r \in \bN$ be coprime with $n > r$. Let 
\[
a_j \in \bZ \qquad (1 \le j \le n-1, \quad j \ne r).
\]
Then, for all but $O_n(1)$ integers $a_r$, the polynomial
\[
x^n + a_1 x^{n-1} + \cdots + a_{n-1} x + t
\]
has Galois group $S_n$ over $\bQ(t)$.
\end{lemma}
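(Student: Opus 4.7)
My plan is to treat $a_r$ as a generic indeterminate $s$, establish that
\[
P(x,s,t) := x^n + a_1 x^{n-1} + \cdots + s\,x^{n-r} + \cdots + a_{n-1} x + t
\]
has Galois group $S_n$ over $\bQ(s,t)$, and then descend to conclude that all but $O_n(1)$ integer specializations $s_0 \in \bZ$ preserve this. Irreducibility of $P$ over $\bQ(t)$ is automatic for every specialization of $s$: since $P$ is monic and linear in $t$, it is irreducible in $\bZ[x,t]$ by Gauss's lemma and hence in $\bQ(t)[x]$, so the Galois group is always transitive.

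For the generic Galois group computation, the Newton polygon of $P$ at the place $t = \infty$ (with uniformizer $1/t$) consists of the single segment from $(0,-1)$ to $(n,0)$ of slope $1/n$: all $n$ roots have $\infty$-adic valuation $-1/n$, forcing the tame inertia at $\infty$ to be an $n$-cycle. Analysing $\disc_x P \in \bQ[s,t]$ as a polynomial in $t$ shows, after verifying its $t$-discriminant does not vanish identically in $s$, that it has simple roots for generic $s$, yielding a transposition at each such branch point. It remains to pair the $n$-cycle with the transposition to generate $S_n$, which reduces to primitivity of the action, equivalently that $g_s(x) := P(x,s,0)$ does not decompose as $h \circ k$ with $\deg h, \deg k > 1$. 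Here the hypothesis $\gcd(n,r) = 1$ is essential: for any $m \mid n$ with $1 < m < n$ one has $m \nmid r$, which forces the $x^{n-r}$ coefficient of any composition $h(k(x))$ with $\deg k = m$ to satisfy non-trivial polynomial relations with the other coefficients. Varying $s$ then moves $g_s$ off the compositeness locus, so no decomposition exists for generic $s$, and $\Gal(P/\bQ(s,t)) = S_n$.

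Descending to integer specializations, for every $s_0 \in \bZ$ the specialized Galois group $G(s_0) \le S_n$ remains transitive and contains an $n$-cycle pointwise, so $G(s_0) = S_n$ is equivalent to the nonvanishing of finitely many resolvent polynomials in $s_0$, one for each conjugacy class of transitive proper subgroups of $S_n$ containing an $n$-cycle. Each such resolvent is a nonzero polynomial in $s$ by the generic computation, hence has $O_n(1)$ integer zeros, and summing yields an exceptional set of size $O_n(1)$. The principal obstacle in the plan is the primitivity step: ruling out the compositeness $g_s = h \circ k$ uniformly in $s$ is where the coprimality $\gcd(n,r) = 1$ is indispensable, and a route via Theorem~\ref{TwoParameter} instead appears to require $r = n-1$, making the direct Newton-polygon plus compositeness analysis the more uniform approach.
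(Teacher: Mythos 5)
The paper does not prove this lemma: it is quoted from \cite[Lemma 2]{Die2012}, whose proof obtains $\Gal(g(x)+sx^{n-r}+t\,/\,\bQ(s,t))=S_n$ from the general form of Cohen's two-indeterminate-coefficients theorem \cite{Coh1980} (Theorem~\ref{TwoParameter} is only the special case $r=n-1$) and then specialises $s$. Your plan therefore amounts to reproving Cohen's theorem, and the step where you do so fails. The claim that $\disc_x P$, viewed as a polynomial in $t$, has simple roots for generic $s$ --- equivalently, that $A_s(x)=x^n+a_1x^{n-1}+\cdots+sx^{n-r}+\cdots+a_{n-1}x$ is Morse for generic $s$ --- is false in general. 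Take $n=9$, $r=2$ and all other $a_j=0$, so that $P=x^9+sx^7+t$. Then $A_s'(x)=x^6(9x^2+7s)$, so $x=0$ is a degenerate critical point for every $s$, and $\disc_x P=9^9\,t^6\,(t+A_s(\beta_+))(t+A_s(\beta_-))$ has $t=0$ as a root of multiplicity $6$ identically in $s$; the inertia at $t=0$ is generated by a $7$-cycle, not a transposition. Thus the verification you defer (``its $t$-discriminant does not vanish identically in $s$'') cannot be carried out, and the route ``transitive plus a transposition at every finite branch point'' collapses. The conclusion can still be rescued in this example (a $9$-cycle, a $7$-cycle, one transposition and primitivity do generate $S_9$), but doing so uniformly requires a genuine analysis of all possible inertia types, which is exactly the content of Cohen's theorem.

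Two further soft spots. Your primitivity step asserts that $m\nmid r$ forces the coefficient of $x^{n-r}$ in any composition $h(k(x))$ with $\deg k=m$ to be determined by the higher coefficients; this is true (the top $m$ coefficients of $h\circ k$ agree with those of $\mathrm{lc}(h)\,k^{n/m}$ and each new coefficient of $h$ first enters in a degree congruent to $n$ modulo $m$), but you assert it rather than prove it, and it is precisely where $\gcd(n,r)=1$ does its work. In the descent, ``$G(s_0)=S_n$ is equivalent to the nonvanishing of finitely many resolvent polynomials in $s_0$'' is too quick: containment of $G(s_0)$ in a conjugate of a maximal $H<S_n$ is detected by the resolvent $\Phi_H(Y;s_0,t)$ acquiring a root in $\bQ(t)$, which is not the vanishing of a single polynomial in $s_0$. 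This is repairable --- such a root must lie in $\bQ[t]$ with degree bounded in terms of $n$, and comparing coefficients places the exceptional $s_0$ in a proper Zariski-closed subset of $\bA^1$ of bounded degree, whence $O_n(1)$ exceptions --- but as written the step is incomplete.
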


\subsection{Resolvents}
\label{ResolventBit}

A resolvent is an auxiliary polynomial whose factor type informs the Galois group of the original polynomial. For a classical introduction, see \cite{Cox}.

If $x^n + a_1 x^{n-1} + \cdots + a_n \in \bZ[x]$ has Galois group $G$ and roots $\alp_1, \ldots, \alp_n \in \bC$, then the polynomial
\begin{equation} \label{PhiDef}
\Phi(y) = \Phi(y; a_1, \ldots, a_n) = \prod_{\sig \in S_n / G}
\left(y - \sum_{\tau \in G} \prod_{i \le n} \alp_{\sig \tau(i)}^i \right)
\in \bZ[y;a_1,\ldots,a_n]
\end{equation}
has an integer root $y$. This was used in \cite{Die2012}. By \cite[Lemma 4.1]{CD}, the total degree of $\Phi$ is $O_n(1)$, and if we also have $a_1,\ldots,a_n \in [-X,X]$ then $\Phi$ has an integer root $y \ll X^{O_n(1)}$.

The construction was generalised in \cite{CD2017}. For
\[
\bw = (w_1, \ldots, w_{|G|}) \in \bN^{|G|},
\qquad
\be = (e_1,\ldots,e_n) \in \bN^n,
\qquad \fg \in \bZ,
\]
define
\begin{align*}
\Phi_{\bw, \be, \fg}(y) &= 
\Phi_{\bw, \be, \fg}(y;
a_1, \ldots, a_n) \\ &=
\prod_{\sig \in S_n/G}
(y - r_{\bw, \be, \bg}(\sig))
\in \bZ[y; a_1,\ldots,a_n],
\end{align*}
where
\[
r_{\bw,\be,\fg}(\sig)
= \sum_{k \le |G|} w_k
\sum_{\tau \in G}
\prod_{i \le n}
(\alp_{\sig \tau(i)} + \fg)^{ke_i}.
\]
The present authors studied these resolvents further in \cite{CD}. We now state \cite[Lemma 4.3]{CD}. We also include an explicit description of the roots of the resolvent, which comes from the proof.

\begin{lemma}  \label{GeneralResolvent}
Let
\[
g(T_1,\ldots,T_s,x)
\in \bZ[T_1,\ldots,T_s,x]
\]
be separable and monic of degree $n \ge 1$ in the variable $x$, let $G$ be its Galois group over $\bQ(T_1,\ldots,T_s)$, and let $K \le G$. Let $D$ be the total degree of $g$. Then there exists
\[
\Phi_{g,K}(T_1,\ldots,T_s,Y) \in \bZ[T_1,\ldots,T_s,Y]
\]
of total degree $O_{s,D}(1)$,
monic of degree $[S_n:K]$ in $Y$, such that:
\begin{enumerate}[(i)]
\item Each irreducible divisor has degree at least $[G:K]$ in $Y$.
\item The roots $y \in \overline{\bQ(T_1,\ldots,T_s)}$ are given by
\begin{equation} \label{roots}
r_\sig = \sum_{k \le |K|} w_k
\sum_{\tau \in K} 
\prod_{i \le n}
(\alp_{\sig \tau(i)} + \fg)^{k e_i}
\qquad (\sig \in S_n/K)
\end{equation}
for some positive integers $w_1, \ldots, w_{|K|}, e_1, \ldots, e_n \ll_D 1$ and $\fg \ll_D |g|$, where 
\[
\alp_j = \alp_j(T_1,\ldots,T_s)
\in \overline{\bQ(T_1,\ldots,T_s)}
\qquad (1 \le j \le n)
\]
are the roots of $g(x) \in
\bZ[T_1,\ldots,T_s][x]$.
\item If $t_1,\ldots,t_s \in \bZ$ and $g(t_1,\ldots,t_s,X)$ has Galois group $K$ over $\bQ$ then
$\Phi_{g,K}(t_1,\ldots,t_s,Y)$ has an integer root 
\begin{equation} \label{ybound}
y \ll_D ( |g| \cdot \| \bt \|_{\infty})^{O_{D}(1)}.
\end{equation}
\end{enumerate}
\end{lemma}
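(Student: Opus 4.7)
The plan is to define $\Phi_{g,K}$ directly as
\[
\Phi_{g,K}(T_1,\ldots,T_s, Y) = \prod_{\sig \in S_n/K}(Y - r_\sig),
\]
with $r_\sig$ as in \eqref{roots}, and then verify each claimed property. This is the natural function-field generalisation of the resolvent $\Phi_{\bw,\be,\fg}$ from \S\ref{ResolventBit}: the base ring is promoted from $\bZ$ to $\bZ[T_1,\ldots,T_s]$, and the $\alp_j$ now lie in an algebraic closure of $\bQ(T_1,\ldots,T_s)$. Averaging over $\tau \in K$ makes $r_\sig$ depend only on the coset $\sig K$, so the product is well defined. The first task is to choose the parameters $\bw, \be, \fg$ with the sizes stated in (ii) in such a way that the $r_\sig$ are pairwise distinct across cosets; this genericity step is carried out in the proof of \cite[Lemma 4.3]{CD}, and I expect it to be the main technical obstacle.

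Once the $r_\sig$ are separated, I would verify the structural properties. Permuting $\alp_1,\ldots,\alp_n$ by any element of $S_n$ merely permutes the factors $(Y - r_\sig)$, so by the fundamental theorem of symmetric polynomials every coefficient of $\Phi_{g,K}$ lies in $\bZ[a_2,\ldots,a_n]$, where $a_i = a_i(T_1,\ldots,T_s)$ are the coefficients of $g$ viewed as a polynomial in $x$; in particular these coefficients lie in $\bZ[T_1,\ldots,T_s]$, and $\Phi_{g,K}$ is monic in $Y$ of degree $[S_n:K]$. The total degree bound $O_{s,D}(1)$ follows from the facts that $[S_n:K] \le n!$, each exponent $ke_i$ is $O_D(1)$, the shift satisfies $\fg \ll_D |g|$, and each $a_j$ has total degree at most $D$ in the $T_i$.

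For (i), I would factor $\Phi_{g,K}$ over $\bQ(T_1,\ldots,T_s)[Y]$. The irreducible factors correspond to the orbits of $G$ acting on $S_n/K$ by left multiplication on cosets. The stabiliser of $\sig K$ is $G \cap \sig K \sig^{-1}$, of order at most $|K|$, so every orbit has size at least $|G|/|K| = [G:K]$. Gauss's lemma in the UFD $\bZ[T_1,\ldots,T_s]$, combined with the fact that $\Phi_{g,K}$ is monic in $Y$, then upgrades this to the same lower bound for irreducible divisors in $\bZ[T_1,\ldots,T_s][Y]$.

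For (iii), suppose $g(\bt,x)$ has Galois group $K$ over $\bQ$. Upon specialising $T_i \mapsto t_i$, the element of $\overline{\bQ}$ obtained by choosing $\sig$ to be the identity coset in \eqref{roots} is manifestly invariant under $K$ acting on the roots of $g(\bt,x)$, hence lies in the fixed field $\bQ$. Since $g(\bt,x)$ is monic and this element is a polynomial with integer coefficients in the algebraic integers $\alp_j(\bt) + \fg$, it is itself an algebraic integer, hence lies in $\bZ$; this is the required integer root of $\Phi_{g,K}(\bt, Y)$. The size estimate \eqref{ybound} follows from the standard bound $|\alp_j(\bt)| \ll_n 1 + \max_i |a_i(\bt)|$ combined with the polynomial growth of $|a_i(\bt)|$ in $|g|$ and $\|\bt\|_\infty$.
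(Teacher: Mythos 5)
Your construction coincides with the one the paper itself uses: Lemma~\ref{GeneralResolvent} is imported from \cite[Lemma 4.3]{CD} without proof here, and $\prod_{\sig \in S_n/K}(Y - r_\sig)$ with $r_\sig$ as in \eqref{roots} is exactly the resolvent $\Phi_{\bw,\be,\fg}$ of \S\ref{ResolventBit}, lifted to the base $\bZ[T_1,\ldots,T_s]$. Your verifications are correct: coset-invariance of $r_\sig$, the symmetric-function argument for integrality and monicity (modulo the slip $\bZ[a_2,\ldots,a_n]$ where you mean $\bZ[a_1,\ldots,a_n]$), the orbit--stabiliser count $|\mathrm{Stab}_G(\sig K)| = |G \cap \sig K \sig^{-1}| \le |K|$ for (i), and $K$-invariance plus algebraic integrality plus the Cauchy bound for (iii).

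The one step you defer to the reference --- choosing $\bw, \be, \fg$ within the stated size bounds so that the $r_\sig$ are pairwise distinct across cosets --- is, however, where essentially all the work in the cited proof lives, and it is load-bearing: if the $r_\sig$ are not separated, the stabiliser of the field element $r_\sig$ may strictly contain $G \cap \sig K \sig^{-1}$, and (i) can fail outright (in the degenerate extreme where all $r_\sig$ coincide, $\Phi_{g,K}$ is a power of a linear polynomial). In a self-contained write-up you would need to supply it. The shape of the argument, which the formula \eqref{roots} is engineered for, is: first choose $\fg$ and $\be$ so that $\sig \mapsto \tet_\sig := \prod_i (\alp_{\sig(i)} + \fg)^{e_i}$ is injective on $S_n$; distinct left cosets then give disjoint, hence distinct, multisets $\{\tet_{\sig\tau}\}_{\tau \in K}$, so by Newton's identities their $k$-th power sums $\sum_{\tau} \tet_{\sig\tau}^{k} = \sum_\tau \prod_i (\alp_{\sig\tau(i)}+\fg)^{ke_i}$ differ for some $k \le |K|$; finally a generic small positive integer vector $\bw$ avoids the finitely many hyperplanes on which two of the resulting linear combinations agree. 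Each stage requires only $O_D(1)$ candidate parameters, which is what yields $w_i, e_i \ll_D 1$ and $\fg \ll_D |g|$.
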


\subsection{The determinant method}
\label{DeterminantMethodBit}

This is a tool to count solutions to diophantine equations. It often delivers upper bounds that are uniform in the sizes of the coefficients. For an overview, see \cite{Cetraro}. As discussed in the introduction, the determinant method will play a leading role in our arguments.

The archetypal application of the determinant method is the following theorem \cite{BP1989}.

\begin{thm} [Bombieri--Pila 1989] \label{BPthm} Let $\cC$ be an absolutely irreducible algebraic curve of degree $d \ge 2$ in $\bR^2$. Then the number of integer points in $\cC \cap [-H,H]^2$ is $O_{d,\eps}(H^{\eps+1/d})$.
\end{thm}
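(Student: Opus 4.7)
The plan is to apply the determinant method pioneered by Bombieri and Pila, which confines integer points on a short sub-arc of $\cC$ to an auxiliary algebraic curve of controlled degree, after which Bezout's theorem and absolute irreducibility do the rest. Let $F(x,y) \in \bR[x,y]$ be a defining polynomial for $\cC$. The $O_d(1)$ singular points of $\cC$, bounded via Bezout applied to $F$ and $\partial F/\partial y$, are discarded at the outset. The smooth locus of $\cC \cap [-H,H]^2$ decomposes into $O_d(1)$ real-analytic arcs, each of which (after possibly swapping the roles of $x$ and $y$) may be written as the graph of a real-analytic function $y = f(x)$ on some sub-interval of $[-H, H]$ with $|f'| \le 1$. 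It therefore suffices to bound the integer points on one such arc.

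Fix a parameter $D \in \bN$ and let $N = \binom{D+2}{2}$ enumerate the monomials $\phi_1, \ldots, \phi_N$ of $\bZ[x,y]$ of total degree at most $D$. Partition the $x$-projection of the arc into sub-intervals of a common length $\eta > 0$ to be chosen. On any such sub-interval centred at $x_0$, suppose $(x_1, f(x_1)), \ldots, (x_N, f(x_N))$ are integer points on the arc, and consider
\[
\Delta = \det \bigl( \phi_j(x_i, f(x_i)) \bigr)_{1 \le i, j \le N} \in \bZ.
\]
Writing $\psi_j(x) = \phi_j(x, f(x))$ and Taylor-expanding each $\psi_j$ around $x_0$ factorises the matrix as a product $M V$, with $V$ a Vandermonde in the quantities $x_i - x_0 \in [-\eta/2, \eta/2]$ and $M$ built from the derivatives $\psi_j^{(k)}(x_0)/k!$. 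Using the real-analyticity of $f$, together with a preliminary dyadic subdivision ensuring uniform control on $f^{(k)}$, one obtains an estimate of the shape
\[
|\Delta| \le c(d, D) \, \eta^{N(N-1)/2} \, H^{O_d(DN)}.
\]

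Choosing $\eta$ so that the right-hand side is strictly smaller than $1$ forces $\Delta = 0$, whence the $N$ points lie on a non-trivial curve of total degree at most $D$. Since $\cC$ is absolutely irreducible of degree $d \ge 2$, Bezout gives at most $dD$ such integer points per sub-interval. Summing over the $O(H/\eta)$ sub-intervals gives a total count of $O_d(D \, H / \eta)$, and a final optimisation in $D$ (taking $D$ large in terms of $\eps$, so that $N(N-1)/2$ dominates $DN$ with room to spare) converts the constraint on $\eta$ into the exponent $1/d + \eps$. The main obstacle is the analytic determinant estimate: extracting the sharpest possible exponent of $\eta$ from the Vandermonde/Taylor factorisation, which is what pins down the value $1/d$ rather than a weaker exponent; handling this robustly requires the preliminary subdivision of the arc to guarantee that the coefficient matrix $M$ is uniformly bounded in $D$ and independent of $H$ after rescaling.
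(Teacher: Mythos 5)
The paper does not prove this statement; it is imported verbatim from Bombieri--Pila \cite{BP1989}, so I am comparing your sketch against the original argument rather than against anything in the text. Your overall architecture --- decomposition into $O_d(1)$ monotone arcs with $|f'|\le 1$, a determinant of monomials at integer points in a short window, a Taylor/Vandermonde factorisation forcing $\Delta=0$, then Bezout --- is indeed the Bombieri--Pila strategy.

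There is, however, a genuine gap at the centre of the argument: you take \emph{all} $N=\binom{D+2}{2}$ monomials of degree at most $D$. Once $D\ge d$, the functions $\phi_j(x,f(x))$ are linearly dependent over $\bR$ (the relation $F(x,f(x))\equiv 0$, and its multiples, are exactly such dependencies), so $\Delta=0$ identically and carries no information; worse, the ``non-trivial curve of degree at most $D$'' you extract from the vanishing of $\Delta$ may simply be $\cC$ itself (or contain it as a component), at which point Bezout gives nothing. Even waving that aside, the bookkeeping with the full monomial set yields an exponent $\sigma/e$ with $\sigma=\sum_j\deg\phi_j\asymp D^3$ and $e=N(N-1)/2\asymp D^4$, i.e.\ an exponent tending to $0$ as $D\to\infty$ --- which is false for, say, $y=x^2$. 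The point of the Bombieri--Pila proof is to work with a set of monomials that is linearly independent \emph{as functions on $\cC$}: after a linear change of variables making $F$ of degree $d$ in $y$, one uses $x^iy^j$ with $0\le j\le d-1$ and $i+j\le D$. This set has cardinality $L\sim dD$, degree-sum $\sigma\sim dD^2/2$ and Vandermonde exponent $e\sim L^2/2\sim d^2D^2/2$, so $\sigma/e\to 1/d$, which is where the exponent $1/d$ actually comes from; and the non-trivial vanishing combination is a polynomial of $y$-degree less than $d$, hence not divisible by $F$, so Bezout legitimately bounds the points per window by $dD$. Relatedly, your determinant bound with an unspecified $H^{O_d(DN)}$ cannot pin down the exponent: the final exponent is exactly $\sigma/e$, so the power of $H$ must be tracked precisely as $(\eta/H)^{e}H^{\sigma}$. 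The obstacle you flag (uniformity of the derivative bounds after subdivision) is real but secondary; the missing idea is the reduction of the monomial system modulo $F$.
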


\noindent We use this and its lopsided generalisation, as stated in \cite[Lemma 8]{Die2012}. This was essentially given by Browning and Heath-Brown \cite{BHB2005}, however a short argument was incorporated in order to relax the absolute irreducibility requirement to irreducibility over $\bQ$.

\begin{thm} [Lopsided Bombieri--Pila] \label{LopsidedBP}
Let $F \in \bZ[x_1,x_2]$ be irreducible over $\bQ$ and have degree $d \in \bN$. Further, let $B_1, B_2 \ge 1$, and define
\[
N(F; B_1, B_2) = \: \# \{\bx \in \bZ^2:
F(\bx) = 0, \: |x_i| \le B_i \: (1 \le i \le 2) \}.
\]
Put
\[
T = \max \left \{ B_1^{e_1} B_2^{e_2} \right \},
\]
where the maximum is taken over all $(e_1,e_2) \in \bZ_{\ge 0}^2$ for which $x_1^{e_1} x_2^{e_2}$
occurs in $F(\bx)$ with non-zero coefficient. Then
\[
N(F; B_1, B_2) \ll_{d, \eps} T^\eps \exp \left( \frac{\log B_1 \cdot \log B_2}{\log T} \right).
\]
\end{thm}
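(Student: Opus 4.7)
The plan is to combine the determinant method of Heath--Brown and Browning--Heath-Brown, in its anisotropic (lopsided) form, with a short reduction that upgrades the standard absolute-irreducibility hypothesis to $\bQ$-irreducibility.

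First I would reduce to the case where $F$ is absolutely irreducible. Suppose $F$ factors over $\overline{\bQ}$ as $F = c \prod_{j=1}^{r} F_j$ with $r \ge 2$ and pairwise distinct factors. Since $F$ is irreducible over $\bQ$, the absolute Galois group $\Gal(\overline{\bQ}/\bQ)$ permutes the $F_j$ transitively. Any $\bx = (x_1,x_2) \in \bZ^2$ with $F(\bx)=0$ satisfies $F_i(\bx)=0$ for some $i$, and applying each $\sig \in \Gal(\overline{\bQ}/\bQ)$ (which fixes $\bx$) shows $F_j(\bx) = 0$ for \emph{every} $j$. Two distinct irreducible plane curves share only $O_d(1)$ points by Bezout, so $N(F; B_1, B_2) = O_d(1)$ in this case, which is stronger than the claimed bound. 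Hence we may assume $F$ is absolutely irreducible.

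For the main argument, set $\lam_i = \log B_i$ and fix a parameter $M$. Let $\cM$ be the set of monomials $x_1^a x_2^b$ with weighted degree $a \lam_1 + b \lam_2 \le M$, so $|\cM| \sim M^2/(2\lam_1 \lam_2)$. Choose a prime $p$ of moderate size and partition the integer points of $\{F = 0\}$ in the box into residue classes modulo $p$. For each class, form the matrix whose rows are the monomial values $m(\bx)$ for $m \in \cM$ evaluated at $|\cM|$ points in the class. The Heath--Brown $p$-adic valuation argument forces the determinant to vanish once the class is sufficiently large, producing an auxiliary polynomial $G$ of weighted degree at most $M$ that vanishes at all points of that class. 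Provided $M < \log T$, the polynomial $G$ has strictly smaller weighted degree than any multiple of $F$ (since the monomial $x_1^{e_1} x_2^{e_2}$ realising $T$ forces the Newton polytope of $F$ to poke beyond the region defining $\cM$), so $G$ cannot be divisible by $F$; absolute irreducibility of $F$ and Bezout then yield $O_d(1)$ common integer zeros per class.

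Summing over the residue classes and optimising $M$ and $p$ against the constraint $M < \log T$ produces the exponent $(\log B_1 \cdot \log B_2)/\log T$, with polylogarithmic and residue-class overhead absorbed into the $T^\eps$ factor. The principal obstacle is the parameter balancing: one must choose $\cM$, $M$, and $p$ so that (i) the $p$-adic determinant bound genuinely forces vanishing, (ii) the resulting $G$ has weighted degree strictly less than $\log T$ so that coprimality with $F$ is automatic from the shape of the Newton polytope (this is where the \emph{lopsided} nature of $T$ enters, as opposed to the ordinary total degree), and (iii) the final count over residue classes is optimised. The absolutely irreducible case follows the scheme of \cite{BHB2005}, and the Galois-conjugacy step above is the short argument needed to pass to $\bQ$-irreducibility, as in \cite{Die2012}.
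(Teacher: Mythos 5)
The paper offers no proof of this result---it is imported verbatim from \cite{Die2012} (Lemma 8), whose argument is precisely your two-step plan: the Galois-orbit/B\'ezout reduction from $\bQ$-irreducibility to absolute irreducibility, followed by the lopsided determinant method of \cite{BHB2005}. Your sketch is correct in outline; the only imprecision is that B\'ezout gives $\deg F \cdot \deg G$ (not $O_d(1)$) common zeros per auxiliary curve, so the degrees of the auxiliary polynomials must themselves be controlled---which is indeed part of the parameter balancing you defer.
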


Browning \cite[Lemma 1]{Bro2011} states the following result, attributing it to Salberger.

\begin{thm} \label{BrowningThm}
Suppose $F(x_1, x_2, x_3) \in \bZ[\bx]$ defines an absolutely irreducible surface of degree $d \in \bN$, and let $B_1,B_2,B_3 \ge 1$. Put
\[
T = \max \left \{ B_1^{e_1} B_2^{e_2} B_3^{e_3} \right \},
\]
where the maximum is taken over all triples $(e_1, e_2, e_3) \in \bZ_{\ge 0}^3$ for which $x_1^{e_1} x_2^{e_2} x_3^{e_3}$
occurs in $F(\bx)$ with non-zero coefficient, and also put
\[
V_3 = \exp \left \{
\left(\frac{\log B_1 \cdot \log B_2 \cdot \log B_3}{\log T} \right)^{1/2}
\right\}.
\]
Then there exist $g_1(\bx), \ldots, g_J(\bx) \in \bZ[\bx]$ and $\cZ \subset \bZ^3$, with 
\[
J \ll_{d,\eps} T^\eps V_3,
\qquad
|\cZ| \ll_{d,\eps} T^\eps V_3^2,
\]
such that the following hold:
\begin{enumerate}[(i)]
\item Each $g_j$ is coprime to $F$ and has degree $O_{d,\eps}(1)$.
\item If $\bx \in \bZ^3 \cap [-B_1,B_1] \times [-B_2,B_2] \times [-B_3, B_3] \setminus \cZ$ and $F(\bx) = 0$ then $g_j(\bx) = 0$ for some $j$.
\end{enumerate}
\end{thm}

The main strengths of the determinant method are (i) the ability to deliver bounds that are uniform in the coefficients, and (ii) versatility. The latter is also a weakness, as stronger bounds can often be obtained in more specific settings, using the structure of the polynomials involved. Another weakness of the determinant method is that optimal bounds are often not yet known, for example the analogue of Theorem~\ref{BrowningThm} for threefolds remains unsolved.

\subsection{Linear relations on the discriminant variety}

We will use the determinant method to cover the integer points up to a given height on a surface by a collection of curves. As discussed in the introduction, rational lines need to be treated separately. We will handle these cases using the following tools.

Below we state \cite[Lemmas 5 and 6]{Die2013}, incorporating the additional conditions discussed in \cite[\S 7]{DOS}. We denote by $\Del(a_1,\ldots,a_n)$ the discriminant of 
\[
x^n + a_1 x^{n-1} + \cdots + a_n.
\]

\begin{lemma} \label{even1} Let $n \ge 3$ and $a_1, \ldots, a_{n-2}$ be integers with \[
n \notin 
\bigcup_{m \text{ odd}} 
\{ m^2, m^2 + 1 \},
\]
and let $c_1, c_2 \in \bQ$. Then the polynomial
\[
z^2 - \Del(a_1, \ldots, a_{n-2}, c_1 a_n + c_2, a_n)
\]
in $z$ and $a_n$ is irreducible over $\bQ$.
\end{lemma}

\begin{lemma} \label{even2} Let $n \ge 3$ and $a_1, \ldots, a_{n-2}$ be integers such that $n$ is not an odd square plus one, and let $c \in \bQ$. Then the polynomial
\[
z^2 - \Del(a_1, \ldots, a_{n-2}, a_{n-1}, c)
\]
in $z$ and $a_{n-1}$ is irreducible over $\bQ$.
\end{lemma}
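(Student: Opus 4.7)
The plan is to show that
\[
P(a_{n-1}) := \Del(a_1, \ldots, a_{n-2}, a_{n-1}, c)
\]
is not a perfect square in $\bQ[a_{n-1}]$, which is equivalent to the desired irreducibility. Indeed, since $z^2 - P \in \bQ[a_{n-1}][z]$ is monic in $z$, Gauss's lemma reduces irreducibility in $\bQ[z, a_{n-1}]$ to irreducibility in $\bQ(a_{n-1})[z]$; a quadratic $z^2 - P$ is irreducible in the latter precisely when $P$ is not a square in $\bQ(a_{n-1})$; and since $\bQ[a_{n-1}]$ is a UFD, this is equivalent to $P$ not being a square in $\bQ[a_{n-1}]$.

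The key step is to identify the leading term of $P$ as a polynomial in $a_{n-1}$. Recall that $\Del(a_1, \ldots, a_n)$ is weighted homogeneous of weighted degree $n(n-1)$, where $a_k$ carries weight $k$. Since $a_{n-1}$ has weight $n-1$, any monomial $a_1^{e_1} \cdots a_n^{e_n}$ appearing in $\Del$ satisfies $e_{n-1} \le n$, with equality forcing all other $e_k = 0$. Hence the coefficient of $a_{n-1}^n$ in $\Del$ is a nonzero constant independent of the remaining variables, and this constant survives the specialisation. In particular, $P$ has degree exactly $n$ in $a_{n-1}$, with leading coefficient equal to the coefficient of $t^n$ in $\Del(x^n + tx)$.

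I would compute this constant using the factorisation $x^n + tx = x(x^{n-1} + t)$ together with the product formula $\Del(fg) = \Del(f)\Del(g)\Res(f, g)^2$. Using $\Res(x, x^{n-1} + t) = t$ and the standard identity $\Del(x^m + t) = (-1)^{m(m-1)/2} m^m t^{m-1}$ with $m = n - 1$, one obtains
\[
\Del(x^n + t x) = (-1)^{(n-1)(n-2)/2}(n-1)^{n-1} t^n,
\]
so the leading coefficient of $P$ is $\varepsilon(n-1)^{n-1}$ with $\varepsilon \in \{\pm 1\}$.

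Finally, suppose for contradiction that $P = Q^2$ in $\bQ[a_{n-1}]$. Then $\deg P = n$ must be even and $\varepsilon(n-1)^{n-1}$ must be a positive rational square. If $n$ is odd, the degree condition fails immediately. If $n$ is even, then $n - 1$ is odd, so $(n-1)^{n-1}$ is a rational square if and only if $n - 1$ is itself a perfect square, necessarily of some odd integer $m$; that is, $n = m^2 + 1$ with $m$ odd, contradicting the hypothesis. I expect the main technical hurdle to be the explicit leading-coefficient computation, but the factorisation $x(x^{n-1}+t)$ makes it a routine invocation of the product formula for discriminants.
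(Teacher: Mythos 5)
Your proof is correct. The paper does not prove this lemma itself---it imports it from \cite[Lemma 6]{Die2013}, with the corrected hypothesis from \cite[\S 7]{DOS}---but your argument is a valid self-contained proof and follows the same method that underlies the citation: the isobaric structure of the discriminant shows $\Del$ has degree exactly $n$ in $a_{n-1}$ with constant leading coefficient $(-1)^{(n-1)(n-2)/2}(n-1)^{n-1}$, and the parity/square analysis of this coefficient is precisely the source of the ``$n$ is not an odd square plus one'' hypothesis.
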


\section{Proof of Theorem \ref{MainThm}}

Observe that 
\[
4 \le d \ll_n 1.
\]
One can check that
\[
\frac{n+4}{8} \le E.
\]
This is straightforward to check if $n \ge 7$. In the case $n = 6$, the inequality follows readily upon noting from the data tabulated in \cite{BM1983} that $d \le 120$. Thus, by Lemma \ref{imprim} and Corollary \ref{StartingPoint}, it suffices to count integers $a_2,\ldots,a_n$ in the ranges \begin{equation} \label{RangesAgain}
|a_j| \le CX^{j/(2n-2)}
\qquad (2 \le j \le n).
\end{equation}
such that 
\[
x^n + a_2 x^{n-2} + a_3 x^{n-3} + \cdots + a_n \in \bZ[x]
\]
has Galois group $G$. 

We begin by choosing $a_2, \ldots, a_{n-3}$ arbitrarily in the ranges \eqref{RangesAgain}, in one of 
\[
O_n \left(X^{
\sig_n - 1 - 1/(2n-2) 
- (n-2)/(2n-2)}
\right)
\]
many ways. 
Define
\[
f_3(x; a, b, c)
= x^n + a_2 x^{n-2} + \cdots
+ a_{n-3} x^3 + ax^2 + bx + c 
\in \bZ[x; a,b,c].
\]
By Theorem \ref{TwoParameter}, applied to the field $\bQ(a)$, the polynomial $f_3$ has Galois group $S_n$ over $\bQ(a,b,c)$. Lemma \ref{GeneralResolvent} furnishes 
\[
\Phi_{f_3}(A,B,C,Y) 
\in \bZ[A,B,C,Y],
\]
monic of degree $d$ in $Y$ and irreducible over $\bZ(A,B,C)$, such that if $a,b,c \in \bZ \cap [-X,X]$ and $f_3(x; a,b,c)$ has Galois group $G$ over $\bQ$ then $\Phi_{f_3}(a,b,c,Y)$ has an integer root $y \ll_n X^{O_n(1)}$. By Gauss's lemma, it is also irreducible over the quotient field $\bQ(a,b,c)$. This field is perfect, so the discriminant of $\Phi_{f_3}$ in $y$ is a non-zero integer polynomial in $a,b,c$. In particular, all but $O_n(1)$ specialisations $a \in \bZ$ give rise to a separable polynomial $g(b,c,y) = \Phi_{f_3}(a,b,c,y)$ in $y$ over $\bQ(b,c)$.

\subsection{Separable case}

We now choose an integer
$a \ll X^{(n-2)/(2n-2)}$ for which 
$g(b,c,y)$ is separable over $\bQ(b,c)$ as a polynomial in $y$. By Theorem \ref{TwoParameter}, the polynomial 
\[
f_2(x;b,c) := f_3(x;a,b,c)
\]
has Galois group $S_n$ over $\overline{\bQ}(b,c)$. 

Assume for a contradiction that the affine surface $\cY$ cut out by the vanishing of $g(b,c,y)$ is not absolutely irreducible. Then, as $g(b,c,y)$ is monic in $y$, we must have
\[
g(b,c,y) = 
h_1(y; b,c) h_2(y; b,c) \in \overline{\bQ}(b,c)[y],
\]
for some non-constant polynomials $h_1$ and $h_2$. For $i=1,2$, let $r_{\sig_i}$ be a root of $h_i$, given by specialising $a$ in \eqref{roots}. Then, with 
\[
\kap = \sig_2 \sig_1^{-1} \in \Gal(f_2,\overline{\bQ}(b,c)),
\]
we have $\kap(r_{\sig_1}) = r_{\sig_2}$. As $h_1$ has coefficients in $\overline{\bQ}(b,c)$, we see that $h_1$, $h_2$ have a common root $\kap(r_{\sig_1}) = r_{\sig_2}$, contradicting the separability of $g$. The upshot is that $\cY$ is indeed absolutely irreducible.

By Theorem \ref{BrowningThm}, there exist $g_1(b,c,y),\ldots,g_J(b,c,y) \in \bZ[b,c,y]$ and $\cZ \subset \bZ^3$, with 
\[
J \ll_{d,\eps} 
X^{\eps + B_n(d)},
\qquad
|\cZ| \ll_{d,\eps} 
X^{\eps + 2B_n(d)},
\]
such that the following hold:
\begin{enumerate}[(i)]
\item Each $g_j$ is coprime to $g$ and has degree $O_{d, \eps}(1)$.
\item If $(b,c,y) \in \cY(\bZ) \setminus \cZ$ and
\begin{equation} \label{bcy}
|b| \le CX^{1/2}, \qquad
|c| \le CX^{n/(2n-2)}, \qquad
|y| \le CX^C
\end{equation}
then $g_j(b,c,y) = 0$ for some $j$.
\end{enumerate}
As $d \ge 4$, we have $B_n(d) < N_n$, and the total contribution from $(b,c,y) \in \cZ$ is at most a constant times
\[
X^{\sig_n - 1 - \: \frac1{2n-2} + 2B_n(d) + \eps} < X^E.
\]

Next, given $j$, we count solutions to
\begin{equation} \label{curves}
g(b,c,y) = g_j(b,c,y) = 0.
\end{equation}
In view of \eqref{ybound}, it suffices to count solutions in the ranges \eqref{bcy}. If $\deg_y(g_j) = 0$ then let $F(b,c) = g_j(b,c)$, and otherwise let $F(b,c)$ be the resultant of $g$ and $g_j$ in the variable $y$. Then $F(b,c) = 0$ whenever we have \eqref{curves}, and so 
$\cF(b,c) = 0$ for some irreducible divisor 
\[
\cF(b,c) \in \bQ[b,c]
\]
of $F$. If $\cF$ is non-linear, then Theorem \ref{LopsidedBP} yields
\begin{align*}
&\# \{ (b,c) \in \bZ^2: |b| \le CX^{1/2}, \:
|c| \le CX^{n/(2n-2)}, \: \cF(b,c) = 0 \} \\
&\ll_{d,\eps} X^{N_n + \eps},
\end{align*}
and the contribution to $F_n(X;G)$ from this case is $O_{n,\eps} (X^{E + \eps})$.

Now suppose $\cF$ is linear. If the coefficient of $b$ in $\cF(b,c)$ is non-zero, then $b = c_1 c + c_2$ for some $c_1, c_2 \in \bQ$. As $G < A_n$, we have
\[
P(c,z) := z^2 - \Del(0,a_2,\ldots,a_{n-3}, a, c_1c+c_2,c) = 0
\]
for some $z \in \bN$, where $\Del(a_1,\ldots,a_n)$ denotes the discriminant of 
\[
x^n + a_1 x^{n-1} + \cdots + a_n.
\]
The polynomial $P$ is irreducible, by Lemma \ref{even1}. Note that $\Del$ has total degree $2n-2$, so if 
$|a_1|,\ldots,|a_n| \le X$ then 
\[
|\Del(a_1,\ldots,a_n)| \le CX^{2n-2}.
\]
In particular, if $P(c,z) = 0$ and 
$a_2,\ldots,a_{n-3},a,
c_1c+c_2, c \in [-X,X]$ then $|z| \le CX^{n-1}$. Theorem \ref{LopsidedBP} yields
\begin{align*}
&\# \{ (c,z) \in \bZ^2: 
|c| \le CX^{n/(2n-2)}, \:
|z| \le CX^{n-1}, \:
P(c,z) = 0 \} \\
&\ll_{n,\eps} X^{N_n + \eps}.
\end{align*}
Thus, this case contributes at most $O_{n,\eps} (X^{E + \eps})$ to $F_n(X;G)$.

Otherwise $\cF(b,c) = \lam(c-\mu)$, for some $\mu \in \bQ$ and some $\lam \in \bQ \setminus \{0\}$, and
\[
Q(b,z) := 
z^2 - \Del(0, a_2 ,\ldots,a_{n-3}, a, b,\mu) 
= 0
\]
for some $z \in \bN$. The polynomial $Q$ is irreducible, by Lemma \ref{even2}. Theorem \ref{LopsidedBP} yields
\[
\# \{ (b,z) \in \bZ^2: |b| \le CX^{1/2}, 
\: |z| \le CX^{n-1},
\: Q(b,z) = 0 \} \ll X^{\eps+1/4}.
\]
As $1/4 < N_n$, the contribution to $F_n(X;G)$ from this case is $O_n(X^E)$. 

\subsection{Inseparable case}

Let us now instead choose $a \in \bZ$ such that $g(b,c,y) = \Phi_{f_3}(a,b,c,y)$ is inseparable in $y$. We do not use this information specifically, but rather that there are at most $O(1)$ many such choices.

We have two variables left to specialise, namely $b$ and $c$. We will use Galois theory over function fields to argue that specialising $b$ generically gives Galois group $S_n$. We will then use a Galois resolvent and a lopsided Bombieri--Pila estimate to show that very few specialisations
$c$ lead to Galois group $G \ne S_n$.

To carry out this strategy, we now specialise $b$ and write 
\[
f_1(x;c) = f_2(x;b,c).
\]
By Lemma \ref{OneParameter}, there are at most $O(1)$ integers $b$ such that 
\[
\Gal(f_1, \bQ(c)) \ne S_n.
\]
As
\[
\sig_n - 1 - \frac1{2n-2} - \frac{n-2}{2n-2}
+ \frac{n}{2n-2} = 
\sig_n - 1 + \frac1{2n-2} < E,
\]
this case contributes at most $O_n(X^E)$ to $F_n(X;G)$.

For each of the other $O(X^{1/2})$ possible choices of $b \in \bZ$, we have $\Gal(f_1, \bQ(c)) = S_n$. Lemma \ref{GeneralResolvent} furnishes $\Phi_{f_1}(c,y) 
\in \bZ[c,y]$, monic of degree $d$ in $y$ and irreducible, such that if $c \in \bZ \cap [-X,X]$ and $f_1(x;c)$ has Galois group $G$ over $\bQ$ then $\Phi_{f_1}(c,Y)$ has an integer root $y \ll_n X^{O_n(1)}$. Lemma \ref{LopsidedBP} yields
\begin{align*}
&\# \{ (c,y) \in \bZ^2: |c| \le CX^{n/(2n-2)}, \:
|y| \le CX^C, \: \Phi_{f_1}(c,y) = 0 \} \\
&\ll_{n,\eps} 
X^{\eps + n / (d(2n-2))}.
\end{align*}
Since
\[
\sig_n - 1 - \frac{1}{2n-2} - \frac{n-2}{2n-2}
+ \frac12 + \frac{n}{d(2n-2)} =
\sig_n - 1 + \frac{n}{d(2n-2)} < E,
\]
this final case also contributes at most $O_n(X^E)$ to $F_n(X;G)$.

Therefore 
\[
F_n(X;G) \ll_{n,\eps} X^{E+\eps},
\]
completing the proof of Theorem \ref{MainThm}.

\providecommand{\bysame}{\leavevmode\hbox to3em{\hrulefill}\thinspace}

\end{document}